\providecommand{\U}[1]{\protect\rule{.1in}{.1in}}
\newtheorem{theorem}{Theorem}
\newtheorem{definition}[theorem]{Definition}
\newtheorem{lemma}[theorem]{Lemma}
\newtheorem{remark}[theorem]{Remark}
\newenvironment{proof}[1][Proof]{\noindent\textbf{#1.} }{\ \rule{0.5em}{0.5em}}
\numberwithin{equation}{section}
\newcommand{\e}{\varepsilon}
\begin{document}

\title{Varifold solutions of a sharp interface limit \\of a diffuse interface model for tumor growth}
\author{Elisabetta Rocca\\\textsl{Dipartimento di Matematica, Universit\`a degli Studi di Pavia}\\{Via Ferrata 5, 27100 Pavia, Italy}\\\textrm{E-mail:~~\texttt{elisabetta.rocca@unipv.it}}
\and Stefano Melchionna\\\textsl{University of Vienna, Faculty of Mathematics}\\{Oskar-Morgenstern-Platz 1, 1090 Vienna, Austria}\\\textrm{E-mail:~~\texttt{stefano.melchionna@univie.ac.at}} }
\maketitle

\begin{abstract}
We discuss the sharp interface limit of a diffuse interface model for a
coupled Cahn-Hilliard--Darcy system that models tumor growth when a certain
parameter $\varepsilon>0$, related to the interface thickness, tends to zero.
In particular, we prove that weak solutions to the related initial boundary
value problem tend to varifold solutions of a corresponding sharp interface
model when $\varepsilon$ goes to zero.

\end{abstract}


\noindent\textbf{Key words:}~~free boundary problems, diffuse interface
models, sharp interface limit, Cahn-Hilliard equation, Darcy law, tumor growth.

\vspace{2mm}

\noindent\textbf{AMS (MOS) subject clas\-si\-fi\-ca\-tion:} 35R35, 35Q92,
35K46, 49J40, 92B05.

\section{Introduction}

The present contribution is devoted to the study of the relations between a
diffuse and a sharp interface Cahn-Hilliard-Darcy model for tumor growth.

The morphological evolution of a growing solid tumor is the result of the
dynamics of a complex system that includes many nonlinearly interacting
factors, such as cell-cell and cell-matrix adhesion, mechanical stress, cell
motility and angiogenesis just to name a few. It is clear that mathematics
could make a huge contribution to many areas of experimental cancer
investigation since there is now a wealth of experimental data which requires
systematic analysis. At the current stage of cancer research, most of the
mathematical models are built and developed from the following three
perspectives: discrete (microscopic), continuous (macroscopic), and hybrid
(micro-macroscopic). Numerous mathematical models have been developed to study
various aspects of tumor progression and this has been an area of intense
research (see the recent reviews by Fasano et al. \cite{FBG06}, Graziano and
Preziosi \cite{GP07}, Friedman et al. \cite{FBM07}, Bellomo et al.
\cite{BLM08}, Cristini et al. \cite{CEtAl08}, and Lowengrub et al.
\cite{LEtAl10}). The existing models can be classified into two main
categories: continuum models and discrete models. We concentrate on the former
ones. This category can be subsequently split in two basic types of models
namely the (classical) \textsl{sharp} interface models, where the interface
between the fluids is modeled as a (sufficiently smooth) surface, and
so-called diffuse interface models, where the \textsl{sharp} interface is
replaced by an interfacial region, where a suitable order parameter ($\phi$ in
what follows) varies smoothly, but with a large gradient between two
distinguished values.

The necessity of dealing with multiple interacting constituents has led, in
particular, to the consideration of diffuse-interface models based on
continuum mixture theory (see, for instance, \cite{CLW09} and references
therein). In the diffuse approach, sharp interfaces are replaced by narrow
transition layers that arise due to differential adhesive forces among the
cell-species. The main advantages of the diffuse interface formulation are:

\begin{itemize}
\item[-] it eliminates the need to enforce complicated boundary conditions
across the tumor/host tissue and other species/species interfaces that would
have to be satisfied if the interfaces were assumed sharp, and

\item[-] it eliminates the need to explicitly track the position of
interfaces, as is required in the sharp interface framework.
\end{itemize}

Then, the natural question arises how diffuse and sharp interface models are
related if a suitable parameter $\varepsilon>0$, which measures the width of
the diffuse interface, tends to zero. There are already some results on this
question, which are based on formally matched-asymptotics calculations
(cf.~the recent work by Garcke et. al. \cite{GLSS}), but so far there are very
few mathematically rigorous convergence results (cf. \cite{RS16}). This is
indeed the aim of the present contribution.

The mathematical technique we exploit here consists mainly in considering the
know results for Cahn-Hilliard equations by \cite{Chen} and try to extend them
to the coupled Cahn-Hilliard-Darcy system (first neglecting the nutrient) in
the spirit of what Abels et al. (cf.~\cite{AL} and also \cite{AR}) did for a
two-phase fluid model. The problem of dealing with a complete tumor-growth
model coupling Cahn-Hilliard equation for the tumor phase with a non-zero
source, Darcy law for the velocity, and a reaction-diffusion equation for the
nutrient (cf., e.g., \cite{DFRSS} or \cite{GLSS}) is still open.

Other techniques could also be investigated. For example, recently in
\cite{RS16} the authors exploited Gamma convergence tools for Gradient Flows
systems in order to prove the passage from diffuse to sharp interfaces for a
variant of a different tumor growth model proposed in \cite{HZO} (cf.~also
\cite{Hil}) where the velocity field is not considered and a coupled
Cahn-Hilliard-Reaction-Diffusion system is analyzed. It is worth mentioning
that a Gamma-convergence approach cannot be applied to the problem considered
in this paper due to the lack of gradient structure of system under consideration.

The initial boundary value problem we are interested in here is indeed the
following one:%
\begin{align}
\partial_{t}\phi-\Delta\mu+\nabla\cdot(u\phi) &  =0\text{ in }\Omega
\times(0,\infty)\text{,}\label{diff mod ch}\\
\mu &  =-\varepsilon\Delta\phi+\frac{1}{\varepsilon}F^{\prime}(\phi)\text{ in
}\Omega\times(0,\infty)\text{,}\label{diff mod chem}\\
u &  =-\nabla P+\mu\nabla\phi\text{ in }\Omega\times(0,\infty)\text{,}%
\label{diff mod vel}\\
\nabla\cdot u &  =0\text{ in }\Omega\times(0,\infty)\text{,}%
\label{diff mod incomp}\\
\nu\cdot\nabla\phi &  =\nu\cdot\nabla\mu=\nu\cdot u=0\text{ on }\partial
\Omega\times(0,\infty)\text{,}\label{diff mod bc}\\
\phi(0) &  =\phi_{0,\varepsilon}\text{ in }\Omega\text{,}\label{diff mod ic}%
\end{align}
where $\Omega$ is a bounded subset of $%
\mathbb{R}
^{d}$ with a smooth boundary $\partial\Omega$, $\nu$ denotes the outward unit
normal vector to $\partial\Omega$, $F$ is a double-well potential with minima
in $-1$ and $1$, e.g. $F(r)=\frac{1}{8}(1-r^{2})^{2}$, and $\varepsilon$ is a
small positive parameter related to the interface thickness. Moreover,
$\phi_{0,\varepsilon}$ is a family of approximating initial data which satisfy
a well-preparedness condition (see below). The dynamics of the {phase
variable} $\phi$ (and of the chemical potential $\mu$) is regulated by the
convective Cahn-Hilliard equation (\ref{diff mod ch})-(\ref{diff mod chem}).
The {velocity field} $u$ fulfills the Darcy's law (\ref{diff mod vel}) (here
$P$ denotes a pressure) including the so-called Korteweg term $\mu\nabla\phi$.

The PDE system we consider here (as well as some generalizations of it) has
been already studied from the point of view of existence of solutions,
regularity, and long-time behavior in \cite{LTZ13} (cf. also \cite{JWZ} and
\cite{DFRSS} for more general models), while the formal expansion method for
the sharp interface limit has been recently performed in \cite{GLSS} again for
a more complicated system, where also the nutrient variable and chemotaxis
effects have been taken into account.

The matched asymptotic expansion performed in \cite{GLSS} shows, formally,
that system (\ref{diff mod ch})-(\ref{diff mod ic}) converges, for
$\varepsilon\rightarrow0$, to the sharp-interface limit problem given by \
\begin{align}
\phi &  =1\text{ in }\Omega^{T}\text{, }\label{sharp int 1}\\
\phi &  =-1\text{ in }\Omega^{H}\text{,}\label{sharp int -1}\\
2(-V+u\cdot n)  &  =[\nabla\mu]_{H}^{T}\cdot n\text{ on }\Sigma\text{,}%
\label{sharp int V}\\
\mu &  =\sigma k\text{ on }\Sigma\text{,}\label{sharp int mu}\\
\lbrack\mu]_{H}^{T}  &  =0\text{ on }\Sigma\text{,}\label{sharp int mu jump}\\
-\Delta\mu &  =0\text{ in }\Omega^{T}\cup\Omega^{H}\text{,}%
\label{sharp int delta mu}\\
u  &  =-\nabla P\text{ in }\Omega^{T}\cup\Omega^{H}\text{,}
\label{sharp int u}\\
\nabla\cdot u  &  =0\text{ in }\Omega^{T}\cup\Omega^{H}\text{,}%
\label{sharp nt div u}\\
\lbrack u]_{H}^{T}\cdot n  &  =0\text{ on }\Sigma\text{,}%
\label{sharp int u jump}\\
\lbrack P]_{H}^{T}  &  =2\sigma k\text{ on }\Sigma\text{.} \label{sharp int 2}%
\end{align}
Here \textit{tumor region} $\Omega^{T}$ and the \textit{healthy region}
$\Omega^{H}$ are two open and disjoint subset of $\Omega$ separated by a
smooth interface $\Sigma$ which moves with \textit{normal velocity} $V$.
Moreover, $\sigma$ is a constant related to the potential given by
$\sigma=\int_{-1}^{1}\sqrt{\frac{F(r)}{2}}\mathrm{d}r$, $k$ is the mean
curvature of $\Sigma$, $n$ is the outward unit normal to $\Sigma$ pointing
towards $\Omega^{T}$, and $[f]_{H}^{T}$ denotes the jump of $f$ from
$\Omega^{T}$ to $\Omega^{H}$ across the interface $\Sigma$. As for the diffuse
interface case we close the system with boundary and initial conditions%
\begin{align*}
\nu\cdot u  &  =0\text{ on }\partial\Omega\times(0,\infty)\text{,}\\
\nu\cdot\nabla\mu &  =0\text{ on }\partial\Omega\times(0,\infty)\text{,}\\
\Omega^{T}(0)  &  =\Omega_{0}^{T}\text{,}%
\end{align*}
where $\Omega_{0}^{T}$ is the tumor region at the initial time $t=0$.

Our goal is to prove the convergence rigorously. More precisely, in the rest
of the paper we address the following question: under which assumptions on the
potential $F$ do weak solutions of (\ref{diff mod ch})--(\ref{diff mod ic})
converge to weak/generalized solutions of (\ref{sharp int 1}%
)--(\ref{sharp int 2})? We show that if $F$ satisfies proper growth conditions
at infinity, which are fulfilled in particular by the so-called standard
double-well potential $F(r)=\frac{1}{8}(1-r^{2})^{2}$, then the weak
solutions of (\ref{diff mod ch})--(\ref{diff mod ic}) converge to the
so-called varifold solutions of (\ref{sharp int 1})--(\ref{sharp int 2}),
which are defined in the spirit of \cite{Chen} in Section~\ref{main}.

The paper is organized as follows: in Section~\ref{notation} we introduce some
notation and preliminaries we need in the rest of the paper. In
Section~\ref{main} we state our assumptions on the data and the main result of
the paper together with the notion of solutions. Finally, in the last two
Sections~\ref{priori}, \ref{convergence} we prove the main
Theorem~\ref{main thm} by establishing suitable a-priori estimates
(independent of $\varepsilon$) on the solution to (\ref{diff mod ch})--(\ref{diff mod ic})
leading to the passage to the limit as $\varepsilon\rightarrow0$.

\section{Preliminaries and notation}

\label{notation}

In this section we fix the notation and recall some known facts about
functions of bounded variation and varifolds.

Given $\Omega\subset%
\mathbb{R}
^{d}$ a bounded set with a smooth boundary, $d,N\in%
\mathbb{N}
$, $X$ a Banach space with separable dual space $X^{\ast}$, we use the
following notations for these functional spaces.

\begin{itemize}
\item $L^{p}(\Omega)$ and $L^{p}(\Omega,X)$, for $p\in\lbrack1,\infty]$,
denote the standard Lebesgue spaces for scalar and $X$ valued functions, respectively.

\item $C_{0}(\Omega,%
\mathbb{R}
^{N})$ is the closure of compactly-supported continuous functions
$f:\Omega\rightarrow%
\mathbb{R}
^{N}$, in the supremum norm.

\item $C_{0}^{k}(\Omega)$, $k\in%
\mathbb{N}
\cup\{\infty\}$ is the set of $k$-times-differentiable compactly-supported functions.

\item $C^{k}(\bar{\Omega})$, $k\in%
\mathbb{N}
\cup\{\infty\}$ is the set of $k$-times-differentiable functions such that all
derivatives have a continuous extension on $\bar{\Omega}$.

\item $C_{0,\operatorname{div}}^{\infty}(\Omega)=\{f\in C_{0}^{\infty}%
(\Omega):\nabla\cdot f=0\}$ and $L_{\operatorname{div}}^{2}(\Omega
)=\overline{C_{0,\operatorname{div}}^{\infty}(\Omega)}^{L^{2}(\Omega)}$.

\item $L_{\mathrm{loc}}^{p}(0,\infty;X)$ for $p\in\lbrack1,\infty)$ denotes
the space of all measurable functions $f:(0,\infty)\rightarrow X$ such that
$f\in L^{p}(0,t;X)$ for all $t>0$.

\item $M(\Omega;%
\mathbb{R}
^{N})$ for $N\in%
\mathbb{N}
$, denotes the space of all finite $%
\mathbb{R}
^{N}$-valued Radon measures. $M(\Omega;%
\mathbb{R}
)=:M(\Omega)$.

\item $\mathrm{BV}(\Omega)$ is the space of functions of bounded variations.

\item $L_{\omega\ast}^{\infty}(\Omega;X^{\ast})$ denotes the space of all
functions $f:\Omega\rightarrow X^{\ast}$ that are weakly* measurable and
essentially bounded.
\end{itemize}

Given $f\in\mathrm{BV}(\Omega)$ we denote by $Df$ its distributional gradient
and by $|Df|$ the Radon measure generated by%
\[
|Df|(A)=\sup_{Y\in C_{0}(A;%
\mathbb{R}
^{d}):|Y|\leq1}\int_{A}f\nabla\cdot Y\mathrm{d}x\text{, \ \ \ for all }A\text{
open in }\Omega\text{.}%
\]
Moreover, one can show (cf., e.g., \cite{Federer}) that there exists a
$|Df|$-measurable unit vector valued function $n$ such that $Df=n|Df|$,
$|Df|$-a.e.. We recall that%
\[
\mathrm{BV}(\Omega)=\{f\in L^{1}(\Omega):Df\in M\left(  \Omega;%
\mathbb{R}
^{d}\right)  \}
\]
and
\[
\left\Vert f\right\Vert _{\mathrm{BV}(\Omega)}=\left\Vert f\right\Vert
_{L^{1}(\Omega)}+\left\Vert Df\right\Vert _{M\left(  \Omega;%
\mathbb{R}
^{d}\right)  }=\left\Vert f\right\Vert _{L^{1}(\Omega)}+|Df|(\bar{\Omega
})\text{.}%
\]
Let $E$ be a set in $\Omega$. If the characteristic function $\chi_{E}$
belongs to $\mathrm{BV}(\Omega)$, then we say that $E$ has \textit{finite
perimeter} and we denote $D\chi_{E}=n_{E}|D\chi_{E}|$. Note that, if $\partial
E$ is smooth, then $n_{E}$ is the unit inward norm to $\partial E$. Moreover,
we recall that there exists a separable Banach space $X$ such that its dual
space coincide with $\mathrm{BV}(\Omega)$, (cf. \cite{AFP}). As a consequence
the space $L_{\omega\ast}^{\infty}\left(  0,s;\mathrm{BV}(\Omega)\right)
=\left(  L^{1}(0,s;X)\right)  ^{\ast}$ is well defined.

Let now
\[
P=S^{d-1}/\{\nu,-\nu\}
\]
be the set of unit normals of unoriented $(d-1)$-dimensional hyperplanes in $%
\mathbb{R}
^{d}$. A \textit{varifold} $V$ is a Radon measure on $\Omega\times P$. We
define the mass measure $\left\Vert V\right\Vert $ as the Radon measure on
$\Omega$ given by%
\[
\left\Vert V\right\Vert \left(  A\right)  =\int\int_{A\times P}\mathrm{d}%
V(x,p)\text{ \ \ for all }A\text{\ open in }\Omega\text{.}%
\]
The\emph{ }\textit{first variation} $\delta V$ of a varifold $V$ is the linear
functional on $C_{0}^{1}(\Omega;\mathbb{R}^{d})$ defined by
\[
\left\langle \delta V,Y\right\rangle :=\int\int_{\Omega\times P}\nabla
Y:\left(  I-p\otimes p\right)  \mathrm{d}V(x,p)\text{ \ \ \ for all }Y\in
C_{0}^{1}(\Omega;%
\mathbb{R}
^{d})
\]
and its \textit{mean curvature vector} $H$ (wherever it exists) is a
$\left\Vert V\right\Vert $-measurable vector-valued function on $\Omega$
defined by%
\[
-\left\langle \delta V,Y\right\rangle =\left\langle \left\Vert V\right\Vert
,H\cdot Y\right\rangle =\int_{\Omega}\left(  Y(x)\cdot H(x)\right)
\mathrm{d}\left\Vert V\right\Vert \left(  x\right)  \text{ \ \ \ for all }Y\in
C_{0}^{1}(\Omega;%
\mathbb{R}
^{d})\text{.}%
\]

\section{Assumptions and main results}

\label{main}

In this section we introduce the main assumptions on the problem data and the
statement of the main results.

Let the potential $F$ be such that $F\in C^{3}(%
\mathbb{R}
)$, $F(\pm1)=0$, and $F(r)>0$ if $r\neq\pm1$. Moreover, let exist
constants $c_{0},C_{c_{0}}>0$, $p\geq4$ such that $F^{\prime\prime}(r)\geq
C_{c_{0}}|r|^{p-2}$ for all $r$ such that $|r|\geq1-c_{0}$. An
example of potential $F$ satisfying the above assumption is the classical
double-well potential $F(r)=\frac{1}{8}(1-r^{2})^{2}$.

\begin{remark}
Note that the same conditions with $p\geq3$ are assumed in \emph{\cite{AL}},
where the authors consider the sharp interface limit of a Cahn-Hilliard
equation coupled with a Navier-Stokes equation, instead of the Darcy's law
\emph{(\ref{diff mod vel})}, for the velocity field. Here we need stronger
coercivity assumptions on $F$ as solutions $u$ to the Darcy's law
\emph{(\ref{diff mod vel})} are, in general, less regular than solutions to
the Navier-Stokes equation.
\end{remark}

We also assume uniform boundedness of the initial energy. More precisely, let
$\phi_{0,\varepsilon}\in H^{1}(\Omega)\cap L^{p}(\Omega)$ be such that there
exists a positive constant $E_{0}$ satisfying%
\begin{equation}
E_{\varepsilon}(\phi_{0,\varepsilon})\leq E_{0}\text{,}%
\label{well preparedness}%
\end{equation}
where the \textit{energy} functional $E_{\varepsilon}$ is defined by%
\begin{equation}
E_{\varepsilon}(\phi)=\int_{\Omega}(\varepsilon \frac{1}{2} |\nabla\phi|^{2}+\frac
{1}{\varepsilon}F(\phi))\mathrm{d}x\text{.}\label{energy}%
\end{equation}
Finally, we ask the initial tumor mass to be independent of $\varepsilon$,
namely
\[
\bar{\phi}_{0,\varepsilon}=\frac{1}{|\Omega|}\int_{\Omega}\phi_{0,\varepsilon
}\mathrm{d}x=m_{0}\in(-1,1)\text{.}%
\]

Before stating our main result, let us rigorously define solutions to system
(\ref{diff mod ch})-(\ref{diff mod ic}) and system (\ref{sharp int 1}%
)-(\ref{sharp int 2}).

\begin{definition}
[Weak solutions to (\ref{diff mod ch})-(\ref{diff mod ic})] \label{def sol diff} We call
$(\phi_{\varepsilon},\mu_{\varepsilon},u_{\varepsilon})$ a \emph{weak
solution} to system \emph{(\ref{diff mod ch})-(\ref{diff mod ic})} if these
functions belong to the regularity class:%
\begin{align*}
\phi_{\varepsilon} &  \in C^{0}([0,\infty);H^{1}(\Omega))\cap L_{\mathrm{loc}%
}^{2}(0,\infty;H^{2}(\Omega))\cap H^1_{\mathrm{loc}}(0,\infty;L^2(\Omega) )\text{,}\\
\mu_{\varepsilon} &  \in L_{\mathrm{loc}}^{2}(0,\infty;L^{2}(\Omega))\text{,
\ \ }\nabla\mu_{\varepsilon}\in L^{2}(0,\infty;L^{2}(\Omega))\text{,}\\
u_{\varepsilon} &  \in L^{2}(0,\infty;L_{\operatorname{div}}^{2}(\Omega)),
\end{align*}
and the following integral identities hold:%
\begin{align}
& \int_{0}^{t}\int_{\Omega}\left(  \phi_{\varepsilon}\partial_{t}\psi
+\phi_{\varepsilon}u_{\varepsilon}\cdot \nabla\psi-\nabla\mu_{\varepsilon
}\cdot\nabla\psi\right)  \mathrm{d}x\mathrm{d}s\nonumber\\
& =\int_{\Omega}\phi_{\varepsilon}(t)\psi(t)\mathrm{d}x-\int_{\Omega}%
\phi_{0,\varepsilon}\psi(0)\mathrm{d}x\text{,} \label{weak form diff prob}
\end{align}
for all $\psi\in C_{0}^{\infty}([0,t]\times\Omega)$, $t>0$, and 
\begin{equation}
\frac{\mathrm{d}}{\mathrm{d}t}E_{\varepsilon}(\phi_{\varepsilon})+\int
_{\Omega}|\nabla\mu_{\varepsilon}|^{2}\mathrm{d}x+\int_{\Omega}|u_{\varepsilon
}|^{2}\mathrm{d}x=0, \label{energy2}
\end{equation}
where
\begin{align}
\mu_{\varepsilon} &  =-\Delta\phi_{\varepsilon}+\frac{1}{\varepsilon}%
F^{\prime}(\phi_{\varepsilon})\text{ a.e. in }\Omega\times\lbrack
0,\infty)\text{,}\label{def mu eps}\\
u_{\varepsilon} &  =-\nabla P+\mu_{\varepsilon}\nabla\phi_{\varepsilon}\text{
a.e. in }\Omega\times\lbrack0,\infty)\text{,}\\
\nu\cdot\nabla\phi_{\varepsilon} &  =0\text{ a.e. on }\partial\Omega
\times\lbrack0,\infty) \text{,} \\
E_\varepsilon(\phi_\varepsilon) &= \int_\Omega e_\varepsilon (\phi_\varepsilon) \mathrm{d}x = \int_\Omega \varepsilon
\frac{1}{2}|\nabla\phi_{\varepsilon}
|^{2}+\frac{1}{\varepsilon}F(\phi_{\varepsilon}) \mathrm{d}x \text{ a.e. in } [0,\infty).
\end{align}
\end{definition}

\begin{definition}
[Varifold solutions to (\ref{sharp int 1})-(\ref{sharp int 2})]%
\label{def sharp int sol}Let $\Omega_{0}^{T}$ be a set of finite perimeter.
Then, $(u,\Omega^{T},\mu,V)$ is called a \emph{varifold solution} to
\emph{(\ref{sharp int 1})-(\ref{sharp int 2})} if the following conditions are satisfied:

\begin{enumerate}
\item $u\in L^{2}(0,\infty;L^{2}(\Omega))$, $\mu\in L_{\mathrm{loc}}%
^{2}(0,\infty;L^{2}(\Omega))$, $\nabla\mu\in L^{2}(0,\infty;L^{2}(\Omega))$.

\item $\Omega^{T}$ can be decomposed as $\Omega^{T}=\cup_{t\geq0}\Omega
_{t}^{T}\times\{t\}$, where $\Omega_{t}^{T}$ is a measurable subset of
$\Omega$. Furthermore,
\[
\chi_{\Omega^{T}}\in C\left(  [0,\infty);L^{1}(\Omega)\right)  \cap
L_{w^{\ast}}^{\infty}(0,\infty;\mathrm{BV}(\Omega))
\]
and $|\Omega_{t}^{T}|=|\Omega_{0}^{T}|$ for all $t\geq0$.

\item $V$ is a Radon measure on $\bar{\Omega}\times P\times(0,\infty)$ such
that $V=V^{t}\mathrm{d}t$ where $V^{t}$ is a Radon measure on $\bar{\Omega
}\times P$ for almost all $t\in(0,\infty)$. Moreover, for a.a. $t\in
(0,\infty)$, $V^{t}$ admits the representation
\begin{equation}
\int_{\bar{\Omega}\times P}\psi(x,p)\mathrm{d}V^{t}(x,p)=\sum_{i=1}^{d}%
\int_{\bar{\Omega}}b_{i}^{t}(x)\psi(x,p_{i}^{t}(x))\mathrm{d}\lambda
^{t}(x)\label{def V}%
\end{equation}
for all $\psi\in C\left(  \bar{\Omega}\times P\right)  $, some Radon measure
$\lambda^{t}$ on $\bar{\Omega}$, and some $\lambda^{t}$-measurable functions
$b_{i}^{t}$, $p_{i}^{t}$ with values in $%
\mathbb{R}
$ and $P$ respectively such that
\[
0\leq b_{i}^{t}\leq1\text{, }\sum_{i=1}^{d}b_{i}^{t}\geq1\text{, }\sum
_{i=1}^{d}p_{i}^{t}\otimes p_{i}^{t}=I\text{ \ \  }\lambda^{t}\text{-a.e.,}%
\]
and
\begin{equation}
\frac{|D\chi_{\Omega_{t}^{T}}|}{\lambda^{t}}\leq\frac{1}{2\sigma}%
\text{.}\label{bound}%
\end{equation}

\item For every $t>0$ and every $\psi\in C^{\infty}_0(\lbrack
0,t] \times{\Omega})$,
\begin{align}
& \int_{0}^{t}\int_{\Omega}[2\chi_{\Omega_{s}^{T}}\partial_{t}\psi-\nabla
\mu\nabla\psi+2\chi_{\Omega_{s}^{T}}u \cdot \nabla\psi]\mathrm{d}%
x\mathrm{d}s\nonumber\\
& =\int_{\Omega}2\chi_{\Omega_{t}^{T}}\psi(t)\mathrm{d}x-\int_{\Omega}%
2\chi_{\Omega_{0}^{T}}\psi(0)\mathrm{d}x.\label{weak form sharp int}%
\end{align}

\item For every $t>0$ and every $Y\in C_{0}^{1}(\Omega,%
\mathbb{R}
^{d})$,%

\begin{equation}
-\left\langle \mathrm{D}\chi_{\Omega_{t}^{T}},\mu Y\right\rangle =\int
_{\Omega}\chi_{\Omega_{t}^{T}}\nabla\cdot(\mu Y)\mathrm{d}x=\frac{1}%
{2}\left\langle \delta V^{t},Y\right\rangle \text{.}\label{first variation V}%
\end{equation}

\item For every $0\leq\tau<t$,
\begin{equation}
\lambda^{t}(\bar{\Omega})+\int_{\tau}^{t}\int_{\Omega}|\nabla\mu
|^{2}\mathrm{d}x\mathrm{d}s+\int_{\tau}^{t}\int_{\Omega}|u|^{2}\mathrm{d}%
x\mathrm{d}s\leq\lambda^{\tau}(\bar{\Omega})\text{.}
\label{nergy ineq sharp int}%
\end{equation}

\item For every $t>0$ and every $\varphi\in C_{0,\operatorname{div}}^{\infty
}(\Omega)$, we have
\begin{equation}
\int_{0}^{t}\int_{\Omega}u\varphi\mathrm{d}x\mathrm{d}s=\int_{0}^{t}%
\int_{\Sigma_{s}}2\mu\varphi\mathrm{d}S\mathrm{d}s\text{,}\label{weak form u}%
\end{equation}
where $\Sigma_{t}=\partial\Omega_{t}^{T}\setminus\partial\Omega$.
\end{enumerate}
\end{definition}

Let us postpone some remarks and comments on the definition of solutions and
state our main result.

\begin{theorem}
[Sharp interface limit]\label{main thm}Let the above assumptions be satisfied.
Then, there exists a sequence $\varepsilon\rightarrow0$ such that the
following holds.

\begin{enumerate}
\item There exists $\Omega^{T}=\cup_{t\geq0}\Omega_{t}^{T}\times
\{t\}\subset\Omega\times\lbrack0,\infty)$ such that
\begin{equation}
\phi_{\varepsilon}\rightarrow-1+2\chi_{\Omega^{T}}\text{ a.e. in~}\Omega
\times\lbrack0,\infty)\text{ and in }C^{\frac{1}{17}}([0,t);L^{2}%
(\Omega))\text{ for any } t\geq0.\label{conv phi}%
\end{equation}

\item There exists $\mu\in L_{\mathrm{loc}}^{2}(0,\infty,;L^2(\Omega))$ such
that $\nabla \mu\in L^{2}(0,\infty,;L^2(\Omega))$ and
\[
\mu_{\varepsilon}\rightarrow\mu\text{ weakly in }L_{\mathrm{loc}}^{2}%
(0,\infty,;H^{1}(\Omega))\text{.}%
\]

\item There exists $u\in L^{2}(0,\infty;L_{\operatorname{div}}^{2}(\Omega))$
such that%
\[
u_{\varepsilon}\rightarrow u\text{ weakly in }L^{2}(0,\infty
;L_{\mathrm{\operatorname{div}}}^{2}(\Omega))\text{.}%
\]

\item There exist a Radon measure $\lambda$ and measures $\lambda_{ij}$,
$i,j\in\{1,...,d\}$, on $\bar{\Omega}\times\lbrack0,\infty)$ such that%
\begin{align}
e_{\varepsilon}(\phi_{\varepsilon})\mathrm{d}x\mathrm{d}t &  \rightarrow
\lambda\text{ as a Radon measure }\bar{\Omega}\times\lbrack0,\infty
)\text{,}\nonumber\\
&  \text{i.e. weakly star in }M(\Omega,%
\mathbb{R}
)\text{,}\label{conv energy density}\\
\varepsilon\partial_{x_{i}}\phi_{\varepsilon}\partial_{x_{j}}\phi
_{\varepsilon}\mathrm{d}x\mathrm{d}t &  \rightarrow\lambda_{ij}\text{ as a
measure on }\bar{\Omega}\times\lbrack0,\infty)\text{,}\nonumber\\
\text{ for }i,j &  \in\{1,...,d\}\text{,}\label{conv grad phi}%
\end{align}
where $e_{\varepsilon}(\phi_{\varepsilon})$ denotes the \emph{energy density}:%
\[
e_{\varepsilon}(\phi_{\varepsilon})=\varepsilon \frac{1}{2}|\nabla\phi_{\varepsilon}%
|^{2}+\frac{1}{\varepsilon}F(\phi_{\varepsilon})\text{.}%
\]

\item There exists a Radon measure $V=V^{t}\mathrm{d}t$ on $\bar{\Omega}\times
P\times\lbrack0,\infty)$ such that $(u, \Omega^T, \mu,V)$ is a Varifold solution of
\emph{(\ref{sharp int 1})-(\ref{sharp int 2})} in the sense of Definition
\emph{\ref{def sharp int sol}}, with $\mathrm{d}\lambda^{t}(x)\mathrm{d}%
t=\mathrm{d}\lambda(x,t)$ (where $\lambda^{t}$ as in
\emph{(\ref{nergy ineq sharp int})} and $\lambda$ as in
\emph{(\ref{conv energy density})}) and with $\sigma=\int_{-1}^{1}\sqrt
{\frac{F(r)}{2}}\mathrm{d}r$. Moreover,
\begin{equation}
\int_{0}^{t}\left\langle \delta V^{s},Y\right\rangle \mathrm{d}s=\int_{0}%
^{t}\int_{\Omega}\nabla Y:\left[  \mathrm{d}\lambda(x,s)I-\left(
\mathrm{d}\lambda_{ij}(x,s)\right)  _{d\times d}\right]
\label{first variation V2}%
\end{equation}
for all $Y$ in $C_{0}^{1}(\Omega\times\lbrack0,t];%
\mathbb{R}
^{d})$ and for all $t>0$.
\end{enumerate}
\end{theorem}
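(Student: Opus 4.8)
The plan is to adapt the programme of Chen \cite{Chen} for the Cahn--Hilliard equation together with the coupling arguments of Abels and Lengeler \cite{AL}; the point is that the strengthened coercivity $F''(r)\ge C_{c_0}|r|^{p-2}$ with $p\ge4$ compensates for the fact that a Darcy velocity enjoys no regularity beyond $L^2(0,\infty;L^2(\Omega))$. The first and hardest step is to collect $\varepsilon$-uniform a priori estimates. From the energy identity \eqref{energy2} and \eqref{well preparedness} one gets $E_\varepsilon(\phi_\varepsilon(t))\le E_0$ for all $t$, hence uniform bounds for $\nabla\mu_\varepsilon$ and $u_\varepsilon$ in $L^2(0,\infty;L^2(\Omega))$, and the coercivity of $F$ gives $\phi_\varepsilon$ bounded in $L^\infty(0,\infty;L^p(\Omega))$. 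Testing \eqref{def mu eps} with $\phi_\varepsilon-m_0$ and using an inequality of the type $F'(r)(r-m_0)\ge\delta\big(F(r)+|F'(r)|\big)-C_\delta$ together with the Poincar\'e--Wirtinger inequality, one bounds $\mu_\varepsilon$ in $L^2_{\mathrm{loc}}(0,\infty;H^1(\Omega))$; since $p\ge4$, $\phi_\varepsilon u_\varepsilon$ is then bounded in $L^2_{\mathrm{loc}}(0,\infty;L^{2p/(p+2)}(\Omega))$ and \eqref{weak form diff prob} gives a bound for $\partial_t\phi_\varepsilon$ in $L^2_{\mathrm{loc}}(0,\infty;(W^{1,q}(\Omega))^{*})$ for a suitable $q$. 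Finally, as in \cite{Chen}, one shows that the \emph{discrepancy measure} $\xi_\varepsilon:=\tfrac\varepsilon2|\nabla\phi_\varepsilon|^2-\tfrac1\varepsilon F(\phi_\varepsilon)$ satisfies $\xi_\varepsilon^{+}\,\mathrm{d}x\to0$ in $M(\bar\Omega)$ uniformly in $t$. This last estimate, which rests on the $L^2_{\mathrm{loc}}(H^1)$-bound for $\mu_\varepsilon$, is where the difference from the Navier--Stokes case of \cite{AL} is felt: lacking velocity regularity one cannot absorb the convective term, which is exactly why the coercivity exponent must be pushed from $p\ge3$ up to $p\ge4$. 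Once these bounds are in place, the rest is the robust Modica--Mortola/varifold machinery.

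\emph{Compactness and the limit phase field.} By weak(-$\ast$) compactness I extract, along a subsequence, $\mu_\varepsilon\rightharpoonup\mu$ in $L^2_{\mathrm{loc}}(0,\infty;H^1(\Omega))$, $u_\varepsilon\rightharpoonup u$ in $L^2(0,\infty;L^2_{\operatorname{div}}(\Omega))$ (so $\nabla\cdot u=0$ and $\nu\cdot u=0$ are preserved), and $e_\varepsilon(\phi_\varepsilon)\,\mathrm{d}x\,\mathrm{d}t\rightharpoonup\lambda$, $\varepsilon\,\partial_{x_i}\phi_\varepsilon\,\partial_{x_j}\phi_\varepsilon\,\mathrm{d}x\,\mathrm{d}t\rightharpoonup\lambda_{ij}$ as Radon measures on $\bar\Omega\times[0,\infty)$; this already gives items (2)--(4). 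For $\phi_\varepsilon$ I use the substitution $w_\varepsilon:=\Phi(\phi_\varepsilon)$ with $\Phi(r):=\int_0^r\sqrt{2F(s)}\,\mathrm{d}s$, for which $|\nabla w_\varepsilon|\le e_\varepsilon(\phi_\varepsilon)$, so $w_\varepsilon$ is bounded in $L^\infty(0,\infty;W^{1,1}(\Omega))$; transporting the $\partial_t\phi_\varepsilon$-bound to $w_\varepsilon$, an Aubin--Lions/Arzel\`a--Ascoli argument gives $w_\varepsilon\to w$ in $C([0,T];L^1(\Omega))$ and a.e., and strict monotonicity of $\Phi$ transfers this to $\phi_\varepsilon\to\phi$ a.e.; the energy bound forces $F(\phi)=0$, i.e.\ $\phi\in\{-1,1\}$ a.e., and with $\Omega^T:=\{\phi=1\}$ one has $\phi=-1+2\chi_{\Omega^T}$. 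The H\"older-in-time convergence $\phi_\varepsilon\to-1+2\chi_{\Omega^T}$ in $C^{1/17}([0,t);L^2(\Omega))$ of item (1) follows by chaining interpolation inequalities between the uniform $L^\infty(L^p)$, $W^{1,1}$ and $\partial_t$ bounds, as in \cite{Chen,AL}. Since $t\mapsto E_\varepsilon(\phi_\varepsilon(t))$ is nonincreasing and bounded, along a further subsequence the energies converge for a.a.\ $t$, and $\lambda=\lambda^t\,\mathrm{d}t$, $\lambda_{ij}=\lambda^t_{ij}\,\mathrm{d}t$ with $e_\varepsilon(\phi_\varepsilon)(\cdot,t)\,\mathrm{d}x\rightharpoonup\lambda^t$ and $\varepsilon\,\partial_{x_i}\phi_\varepsilon\,\partial_{x_j}\phi_\varepsilon(\cdot,t)\,\mathrm{d}x\rightharpoonup\lambda^t_{ij}$ for a.a.\ $t$.

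\emph{Construction of $V$ and the structural conditions.} Lower semicontinuity of the total variation gives $2\sigma|D\chi_{\Omega^T_t}|\le\lambda^t$ (because $Dw(\cdot,t)=2\sigma\,D\chi_{\Omega^T_t}$ and $|Dw_\varepsilon(\cdot,t)|\le e_\varepsilon(\phi_\varepsilon)(\cdot,t)\,\mathrm{d}x$), which is \eqref{bound} and, with the energy inequality, yields the $L^\infty_{\omega\ast}(0,\infty;\mathrm{BV}(\Omega))$-bound; mass conservation $\int_\Omega\phi_\varepsilon(t)\,\mathrm{d}x=m_0|\Omega|$ passes to the limit and gives $|\Omega^T_t|=|\Omega^T_0|$. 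For a.a.\ $t$ let $A^t:=\tfrac{\mathrm{d}(\lambda^t_{ij})}{\mathrm{d}\lambda^t}$; it is symmetric and nonnegative, and $\operatorname{tr}A^t\le1$ $\lambda^t$-a.e., the last point using $\xi_\varepsilon^{+}\to0$ (in the limit $\varepsilon|\nabla\phi_\varepsilon|^2\le\tfrac\varepsilon2|\nabla\phi_\varepsilon|^2+\tfrac1\varepsilon F(\phi_\varepsilon)+\xi_\varepsilon^{+}$). Diagonalising $A^t=\sum_i a^t_i\,q^t_i\otimes q^t_i$ with $\{q^t_i\}$ orthonormal and $a^t_i\ge0$, set $p^t_i:=q^t_i$ and $b^t_i:=\tfrac{1-\operatorname{tr}A^t}{d-1}+a^t_i$; then $0\le b^t_i\le1$, $\sum_i b^t_i=\tfrac{d-\operatorname{tr}A^t}{d-1}\ge1$, $\sum_i p^t_i\otimes p^t_i=I$, so \eqref{def V} defines a Radon measure $V^t$ (and $V:=V^t\,\mathrm{d}t$) with first variation $\langle\delta V^t,Y\rangle=\int_\Omega\nabla Y:(I-A^t)\,\mathrm{d}\lambda^t=\int_\Omega\nabla Y:[\mathrm{d}\lambda^t\,I-\mathrm{d}\lambda^t_{ij}]$, which after integration in $t$ is \eqref{first variation V2}. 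This establishes item (5) together with conditions (1)--(3) and (6) of Definition \ref{def sharp int sol}.

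\emph{The remaining identities.} Passing to the limit in \eqref{weak form diff prob} using strong $L^2$ convergence of $\phi_\varepsilon$ against weak $L^2$ convergence of $u_\varepsilon$ and $\nabla\mu_\varepsilon$ (the constant $-1$ drops out by $\nabla\cdot u=0$, $\nu\cdot u=0$, $\nu\cdot\nabla\mu=0$, and the boundary terms close by the $C([0,\infty);L^2)$-convergence together with $\phi_{0,\varepsilon}\to-1+2\chi_{\Omega^T_0}$) gives \eqref{weak form sharp int}; the energy inequality \eqref{nergy ineq sharp int} follows by integrating \eqref{energy2} on $(\tau,t)$ and taking the $\liminf$, using weak lower semicontinuity of the dissipation and $E_\varepsilon(\phi_\varepsilon(s))\to\lambda^s(\bar\Omega)$ for a.a.\ $s$. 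For \eqref{first variation V} I use the Cahn--Hilliard stress identity $\nabla\cdot\big(e_\varepsilon(\phi_\varepsilon)I-\varepsilon\nabla\phi_\varepsilon\otimes\nabla\phi_\varepsilon\big)=-\mu_\varepsilon\nabla\phi_\varepsilon$ together with the rearrangement $\mu_\varepsilon\nabla\phi_\varepsilon\cdot Y=\nabla\cdot(\phi_\varepsilon\mu_\varepsilon Y)-\phi_\varepsilon\nabla\mu_\varepsilon\cdot Y-\phi_\varepsilon\mu_\varepsilon\nabla\cdot Y$: integrating over $\Omega$, the left-hand side converges to $\langle\delta V^t,Y\rangle$ by the fixed-time convergences of the previous step, and the right-hand side to $-\int_\Omega(-1+2\chi_{\Omega^T_t})\nabla\cdot(\mu Y)\,\mathrm{d}x=2\int_\Omega\chi_{\Omega^T_t}\nabla\cdot(\mu Y)\,\mathrm{d}x$ (strong times weak $L^2$, the constant annihilated by the compact support of $Y$), i.e.\ \eqref{first variation V} holds for a.a.\ $t$ and then for every $t$ after passing to the time-continuous representatives. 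Finally, testing \eqref{diff mod vel} with $\varphi\in C^{\infty}_{0,\operatorname{div}}(\Omega)$ eliminates $P$, so $\int_\Omega u_\varepsilon\cdot\varphi\,\mathrm{d}x=\int_\Omega\mu_\varepsilon\nabla\phi_\varepsilon\cdot\varphi\,\mathrm{d}x$; integrating in time and passing to the limit by the same computation identifies the right-hand side with $2\int_0^t\langle D\chi_{\Omega^T_s},\mu\varphi\rangle\,\mathrm{d}s=\int_0^t\int_{\Sigma_s}2\mu\varphi\cdot n\,\mathrm{d}S\,\mathrm{d}s$, which is \eqref{weak form u}. Hence $(u,\Omega^T,\mu,V)$ is a varifold solution in the sense of Definition \ref{def sharp int sol}, and the main obstacle remains the a priori estimates of the first step, in particular the $L^2_{\mathrm{loc}}(H^1)$-bound for $\mu_\varepsilon$ and the decay of the discrepancy measure under the mere $L^2(0,\infty;L^2)$-control on the Darcy velocity.
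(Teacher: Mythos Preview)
Your overall strategy coincides with the paper's: energy estimate, Chen's $H^1$-bound for $\mu_\varepsilon$ and discrepancy control, the Modica--Mortola substitution $w_\varepsilon=W(\phi_\varepsilon)$ for $BV$-compactness, and then the varifold construction via diagonalisation of $\mathrm{d}\lambda_{ij}/\mathrm{d}\lambda$. The passage to the limit in the weak formulations and in the Darcy law is handled exactly as you describe.

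One point is mislocated. The restriction $p\ge4$ is \emph{not} used in the discrepancy estimate: Chen's Theorem~3.6 is a purely elliptic statement about the equation $\mu_\varepsilon=-\varepsilon\Delta\phi_\varepsilon+\tfrac1\varepsilon F'(\phi_\varepsilon)$ and sees neither the velocity nor the convective term. The place where $p\ge4$ actually enters is the time-H\"older estimate for $\phi_\varepsilon$. The paper does this by Chen's mollification trick: testing the weak formulation with $\psi=\phi_\varepsilon^\eta(t)-\phi_\varepsilon^\eta(\tau)$ one must bound
\[
\int_\tau^t\!\!\int_\Omega\bigl(\nabla\mu_\varepsilon-u_\varepsilon\phi_\varepsilon\bigr)\cdot\nabla\psi\,\mathrm{d}x\,\mathrm{d}s
\le C|t-\tau|^{1/4}\,\|\nabla\phi_\varepsilon^\eta\|_{L^\infty(L^4)}\bigl(1+\|\phi_\varepsilon\|_{L^\infty(L^4)}^4\|u_\varepsilon\|_{L^2(L^2)}^2\bigr)^{3/4},
\]
and it is here, with only $u_\varepsilon\in L^2(0,\infty;L^2)$ available from Darcy, that one is forced to place $\phi_\varepsilon$ in $L^\infty(L^4)$ and hence require $p\ge4$ (in \cite{AL} the extra Navier--Stokes regularity $u_\varepsilon\in L^2(H^1)$ allows $p\ge3$). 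Combining this with $\|\phi_\varepsilon-\phi_\varepsilon^\eta\|_{L^2}^2\le C\eta$ and optimising $\eta=|t-\tau|^{1/8}$ yields $\phi_\varepsilon\in C^{1/16}([0,\infty);L^2)$, whence the $C^{1/17}$ convergence after compactness. Your alternative route via a dual bound on $\partial_t\phi_\varepsilon$ and Aubin--Lions gives compactness in $C([0,T];L^2)$ but does not by itself produce the quantitative exponent $1/17$; for that you need the explicit mollifier computation.
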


\begin{remark}
Let us now comment on the notion of solutions introduced in Definition
\emph{\ref{def sol diff}} and Definition \emph{\ref{def sharp int sol}}.

\begin{enumerate}

\item The weak formulation \emph{(\ref{weak form diff prob})} is derived
by testing \emph{(\ref{diff mod ch})} with some $\psi \in C^\infty_0 ([0,t]\times \Omega)$,
integrating by parts in time and space, and using the boundary and the initial conditions.
We remark that, as $\phi_\varepsilon \in H^1 (0,t;L^2(\Omega))$,
relation \emph{(\ref{weak form diff prob})} can be equivalently rewritten as
\begin{align*}
&\int_\Omega (-\partial_t \phi_\varepsilon \psi + \phi_\varepsilon u_\varepsilon \cdot \nabla\psi
-\nabla\mu_\varepsilon \cdot\nabla\psi){\rm{d}} x =0 \text{ a.e. in } (0,t),  \\
&\phi_\varepsilon (0) = \phi_{0,\varepsilon}.
\end{align*}

\item The energy identity \emph{(\ref{energy})} can be formally obtained by testing equation \emph{(\ref{diff mod ch})}
by $\mu_\varepsilon$ and \emph{(\ref{diff mod chem})} by $\partial_t \phi _\varepsilon$, comparing the two, integrating 
by parts (taking into account the boundary conditions) and using \emph{(\ref{diff mod vel})}.

\item As stated in Theorem \emph{\ref{main thm}}, $\lambda^{t}(\bar
{\Omega})$ is the limit of the energy functional $E_{\varepsilon}%
(\phi_{\varepsilon}(t))$ as $\varepsilon\rightarrow0$. The energy functional for
the sharp interface problem is instead given by the interfacial energy:
$2\sigma|\chi_{\Omega_{t}^{T}}|(\Omega)$. A natural question is how the two
relate. Modica and Mortola \emph{\cite{Mo-Mo}} and Sternberg \emph{\cite{Ste}}
proved that the functional $E_{\varepsilon}$ converge to $2\sigma|\chi
_{\Omega^{T}}|(\Omega)$ in the Gamma-convergence sense with respect to the
topology of $L^{1}(\Omega)$. As a consequence of this result and of
convergence \emph{(\ref{conv phi})}, we have that
\begin{equation}
\lambda^{t}(\bar{\Omega})=\lim_{\varepsilon\rightarrow0}E_{\varepsilon}%
(\phi_{\varepsilon}(t))=\liminf_{\varepsilon\rightarrow0}E_{\varepsilon}%
(\phi_{\varepsilon}(t))\geq2\sigma|\chi_{\Omega_{t}^{T}}|(\Omega)\text{.}%
\label{ineq energy}%
\end{equation}
A second approach to obtain inequality \emph{(\ref{ineq energy})} is the
following. Consider the relation%
\begin{equation}
\varepsilon\int_{\Omega}|\nabla\phi_{\varepsilon}|^{2}\mathrm{d}x=\int
_{\Omega}e_{\varepsilon}(\phi_{\varepsilon})\mathrm{d}x+\int_{\Omega}%
\xi_{\varepsilon}(\phi_{\varepsilon})\mathrm{d}x\text{,}\label{gg}%
\end{equation}
where the \emph{discrepancy density} $\xi_{\varepsilon}$ is given by
$\xi_{\varepsilon}(\phi_{\varepsilon})=\varepsilon/2\left\vert \nabla
\phi_{\varepsilon}\right\vert ^{2}-1/\varepsilon F(\phi_{\varepsilon})$. We
will prove that the discrepancy measure is nonpositive in the limit
$\varepsilon\rightarrow0$, namely $\int_{\Omega}\left(  \xi_{\varepsilon}%
(\phi_{\varepsilon})\right)  ^{+}\mathrm{d}x\rightarrow0$ as $\varepsilon
\rightarrow0$ (see Lemma \emph{\ref{lemma estimate for discrepancy}}). This
yields, by passing to the limit as $\varepsilon\rightarrow0$ in
\emph{(\ref{gg})}, inequality \emph{(\ref{ineq energy})}. Note that, in
general, it is not possible to prove equality in \emph{(\ref{ineq energy})}
even in the simpler case $u=0$, (cf. Section 2.4 of \emph{\cite{Chen}}). For
example, a strict inequality holds true in case the initial data develop a
so-called \emph{phantom interface}, i.e.,
\[
2\left\vert D\chi_{\Omega_{0}^{T}}\right\vert (\Omega)=\left\vert
D\lim_{\varepsilon\rightarrow0}\phi_{0,\varepsilon}(t)\right\vert
(\Omega)<\liminf_{\varepsilon\rightarrow0}\left\vert D\phi_{0,\varepsilon
}\right\vert (\Omega)\text{.}%
\]
However, in the case $u=0$, under some additional assumptions, e.g., radial
symmetry of the solutions \emph{\cite{Chen}} or limit \emph{equipartition of
the energy: }$\int_{\Omega}\left(  \xi_{\varepsilon}(\phi_{\varepsilon
})\right)  \mathrm{d}x\rightarrow0$ (which holds true if $d\leq3$)
\emph{\cite{Le}}, it is possible to show equality in \emph{(\ref{ineq energy}%
)}. Let us mention that the techniques used in \emph{\cite{Le}} strongly rely on
the gradient-flow structure of equation \emph{(\ref{diff mod ch}%
)-(\ref{diff mod chem})} in the case $u=0$. Thus, it seems hard to generalize
that result to the system under consideration.

\item Using the definition of $V$ \emph{(\ref{def V})}, we have that
\[
\mathrm{d}V^{t}(x,p)=\sum_{i=1}^{d}b_{i}^{t}(x)\delta_{p_{i}^{t}(x)}%
\mathrm{d}\lambda^{t}(x)\text{.}%
\]
Thus, by definition of mass measure of a varifold and as a consequence of the
properties of $b_{i}^{t}$, we get
\[
\mathrm{d}\left\Vert V^{t}\right\Vert (x)=\sum_{i=1}^{d}b_{i}^{t}%
(x)\mathrm{d}\lambda^{t}(x)\geq\mathrm{d}\lambda^{t}(x).
\]
Let $H^{t}$ denote the mean curvature vector of $V^{t}$. Then, by definition,
we have, for all $Y\in C_{0}^{1}(\Omega;%
\mathbb{R}
^{d})$,
\begin{align}
-\left\langle \delta V^{t},Y\right\rangle  &  =\left\langle \left\Vert
V^{t}\right\Vert ,H\cdot Y\right\rangle =\int_{\Omega}H(x)\cdot Y(x)\mathrm{d}%
\left\Vert V^{t}\right\Vert (x)\nonumber\\
&  =\int_{\Omega}2\sigma mH(x)\cdot Y(x)|D\chi_{\Omega_{t}^{T}}|(x)\mathrm{d}%
x\text{,}\label{aa}%
\end{align}
where
\begin{equation}
m:=\frac{\mathrm{d}\left\Vert V^{t}\right\Vert (x)}{2\sigma|D\chi_{\Omega
_{t}^{T}}|(x)\mathrm{d}x}\text{.}\label{def m}%
\end{equation}
Note that the two measures $\mathrm{d}\left\Vert V^{t}\right\Vert (x)$ and
$|D\chi_{\Omega_{t}^{T}}|(x)\mathrm{d}x$ are absolutely continuous one with
respect to the other as a consequence of relation \emph{(\ref{bound})}. Moreover,
$m\geq1$. Furthermore, using formula \emph{(\ref{first variation V})}, we
have
\begin{align}
-\left\langle \delta V^{t},Y\right\rangle  &  =-2\int_{\Omega}\chi_{\Omega
_{t}^{T}}\nabla\cdot(\mu Y)\mathrm{d}x=2\int_{\Omega}\mu n_{\Omega_{t}^{T}%
}\cdot Y(x)|D\chi_{\Omega_{t}^{T}}|(x)\mathrm{d}x\nonumber\\
&  =\int_{\Omega}\frac{\mu}{m\sigma}n_{\Omega_{t}^{T}}\cdot Y(x)\mathrm{d}%
\left\Vert V^{t}\right\Vert (x)\text{,}\label{bb}%
\end{align}
where $n_{\Omega_{t}^{T}}$ is the unit vector associated with $D\chi
_{\Omega_{t}^{T}}$ defined as in Section \emph{\ref{notation}}. Comparing
\emph{(\ref{aa})} and \emph{(\ref{bb})}, we deduce
\[
\frac{\mu}{m}n_{\Omega_{t}^{T}}=\sigma H\text{.}%
\]
Multiplying by $n_{\Omega_{t}^{T}}$, as $|n_{\Omega_{t}^{T}}|=1$, we get
\[
\frac{\mu}{m}=\sigma n_{\Omega_{t}^{T}}\cdot H=\sigma k\text{.}%
\]
Here $k:=n_{\Omega_{t}^{T}}\cdot H$ is the so-called generalized mean
curvature. As $m\geq1$, we have that
\[
\mu=m\sigma k\geq\sigma k\text{ on }\Sigma\text{.}%
\]
Thus, relation \emph{(\ref{sharp int mu})} is satisfied up to a multiplicative
constant $m\geq1$ (if $m=1$, we get equation \emph{(\ref{sharp int mu})}). We
remark that, in general, it is not possible to show $m=1$ even in the simpler
case $u=0$ (cf Section 2.4 of \emph{\cite{Chen}}). This is related to a
possible gap between the limit of the energy and the energy of the limit
problem as already discussed above. Indeed, under some growth assumptions on
$\lambda^{t}$, it is possible to show that $\lambda^{t}=\left\Vert
V^{t}\right\Vert $ (cf. \emph{\cite{Chen}}). In this case, thanks to
\emph{(\ref{def m})}, we have
\begin{equation}
\lambda^{t}=2\sigma m|D\chi_{\Omega_{t}^{T}}|\geq2\sigma|D\chi_{\Omega_{t}%
^{T}}|,\label{ff}%
\end{equation}
which is a quantitative version of inequality \emph{(\ref{ineq energy})}. In
particular, this shows that, in the case $\lambda^{t}=\left\Vert
V^{t}\right\Vert $, equality in \emph{(\ref{ineq energy})} and relation $m=1$
are equivalent.

\item From \emph{(\ref{sharp int mu})} and \emph{(\ref{sharp int 2})},
we easily deduce
\begin{equation}
\lbrack P]_{H}^{T}=2\mu\text{ on }\Sigma.\label{relation}%
\end{equation}
Equation \emph{(\ref{weak form u})} is obtained by multiplying equation
\emph{(\ref{sharp int u})} by some $\varphi\in C_{0,\operatorname{div}%
}^{\infty}(\Omega)$, integrating by parts, and using \emph{(\ref{relation})},
and the boundary conditions:
\begin{align*}
\int_{0}^{t}\int_{\Omega_{s}^{T}\cup\Omega_{s}^{H}}u\cdot\varphi
\mathrm{d}x\mathrm{d}s  & =-\int_{0}^{t}\int_{\Omega_{s}^{T}\cup\Omega_{s}%
^{H}}\nabla P\cdot\varphi\mathrm{d}x\mathrm{d}s\\
& =\int_{0}^{t}\int_{\Sigma_{s}}[P]_{H}^{T}\cdot\varphi\mathrm{d}%
S\mathrm{d}s=\int_{0}^{t}\int_{\Sigma_{s}}2\mu\cdot\varphi\mathrm{d}%
S\mathrm{d}s\text{.}%
\end{align*}

\item Equation \emph{(\ref{first variation V})} together with
\emph{(\ref{first variation V2})} imply
\[
\int_{\Omega}2\chi_{\Omega_{t}^{T}}\nabla\cdot(\mu(t)Y)\mathrm{d}%
x=\int_{\Omega}\nabla Y:\left[  \mathrm{d}\lambda(x,t)I-\left(  \mathrm{d}%
\lambda_{ij}(x,t)\right)  _{d\times d}\right] \text{ for a.a. } t>0 .
\]
This relation can be obtained by passing to the limit $\varepsilon
\rightarrow0$ in formula \emph{(\ref{eqq})}. Therefore, equation
\emph{(\ref{first variation V})} stands as a reformulation of identity
\emph{(\ref{def mu eps})} and of condition $\mu_{\varepsilon}\in\frac{\delta
E_{\varepsilon}}{\delta\phi}$ (here $\frac{\delta E_{\varepsilon}}{\delta\phi
}$ denotes the first variation of $E_{\varepsilon}$) in the limit
$\varepsilon\rightarrow0$.

\item Inequality \emph{(\ref{nergy ineq sharp int})} has the meaning of
energy dissipation inequality. It is obtained starting from
 \emph{(\ref{energy identity}) } and by passing to the liminf as $\e \to 0$.
 We remark that equality does not hold in general. Indeed, we have just weak convergence for
$\nabla\mu_{\varepsilon}$ and $u_{\varepsilon}$.

\item Note that, for all $\psi\in C_0^{\infty}([0,t] \times \Omega)$, we
have $\int_{0}^{t}\int_{\Omega^T \cup \Omega^H}\partial_{t}\psi\mathrm{d}x\mathrm{d}%
s=\int_{\Omega^T \cup \Omega^H}\psi(t)\mathrm{d}x-\int_{\Omega^T \cup \Omega^H}\psi(0)\mathrm{d}x$ and
$\int_{0}^{t}\int_{\Omega^T \cup \Omega^H}u_{\varepsilon}\cdot\nabla\psi \mathrm{d}%
x\mathrm{d}s=0$. Thus, the weak formulation of the diffuse interface problem
\emph{(\ref{weak form diff prob})} is
equivalent to
\begin{align}
& \int_{0}^{t}\int_{\Omega^T \cup \Omega^H}\{\left(  \phi_{\varepsilon}+1\right)  \partial
_{t}\psi+\left(  \phi_{\varepsilon}+1\right)  u_{\varepsilon} \cdot \nabla%
\psi-\nabla\mu_{\varepsilon}\cdot\nabla\psi\}\mathrm{d}x\mathrm{d}%
s\nonumber\\
& =\int_{\Omega^T \cup \Omega^H}\left(  \phi_{\varepsilon}(t)+1\right)  \psi(t)\mathrm{d}%
x-\int_{\Omega^T \cup \Omega^H}\left(  \phi_{0,\varepsilon}+1\right)  \psi(0)\mathrm{d}%
x.\label{rel1}%
\end{align}
By passing to the limit as $\varepsilon\rightarrow0$ and using the convergence
results of Theorem \emph{\ref{main thm}}, one gets the weak formulation of the
sharp interface problem 
\emph{(\ref{weak form sharp int})}. Moreover, equation
\emph{(\ref{weak form sharp int})} can be formally deduced as follows. Test
equation \emph{(\ref{sharp int delta mu})} with some $\psi\in C_{0}^{\infty}%
(([0,t] \times \Omega))$, multiply \emph{(\ref{sharp nt div u})} by
$(-1+2\chi_{\Omega^{T}})\psi$, and take the sum getting%
\[
0=\int_{0}^{t}\int_{\Omega^{T}\cup\Omega^{H}}\{\Delta\mu\psi-(-1+2\chi
_{\Omega^{T}})  \nabla \cdot u  \psi\}\mathrm{d}x\mathrm{d}s.
\]
By integrating by parts and using equation \emph{(\ref{sharp int V})}
and the boundary conditions on $\mu$ and $u$,
one obtains%
\begin{align}
0 &  =\int_{0}^{t}\int_{\Omega^{T}\cup\Omega^{H}}\{\left(  -\nabla\mu\right)
\nabla\psi+(-1+2\chi_{\Omega^{T}})u\cdot\nabla\psi\}\mathrm{d}x\mathrm{d}%
s\nonumber\\
&  +\int_{0}^{t}\int_{\Sigma}\left(  [\nabla\mu]_{H}^{T}\cdot n-2u\cdot
n\right)  \psi\mathrm{d}S\mathrm{d}s\nonumber\\
&  =\int_{0}^{t}\int_{\Omega^{T}\cup\Omega^{H}}\{\left(  -\nabla\mu\right)
\nabla\psi+(-1+2\chi_{\Omega^{T}}) u \cdot\nabla\psi\}\mathrm{d}%
x\mathrm{d}s+\int_{0}^{t}\int_{\Sigma}\left(  -2V\right)  \psi\mathrm{d}%
S\mathrm{d}s\text{.}\label{ss}%
\end{align}
Interpreting $V$ as the velocity describing the evolution of the interface
$\Sigma$, we intuitively and formally have
\[
\int_{\Sigma}2V\psi\mathrm{d}S=\int_{\Omega^T\cup\Omega^H}\partial_{t}(-1+2\chi_{\Omega^{T}%
})\psi\mathrm{d}x.
\]
By (formally) integrating by parts this relation and substituting into
\emph{(\ref{ss})} and using relations $\int_{\Omega^T\cup\Omega^H} u \cdot\nabla\psi
\mathrm{d}x=0$ and $\int_{0}^{t}\int_{\Omega^T\cup\Omega^H}\partial_{t}\psi\mathrm{d}%
x\mathrm{d}s=\int_{\Omega^T\cup\Omega^H}\psi(t)\mathrm{d}x-\int_{\Omega^T\cup\Omega^H}\psi(0)\mathrm{d}x$,
we get \emph{(\ref{weak form sharp int})}. This suggests that condition
\emph{(\ref{weak form sharp int})} encodes equations \emph{(\ref{sharp int V}%
)}, \emph{(\ref{sharp int delta mu})}, \emph{(\ref{sharp nt div u})},
and the boundary conditions.
\end{enumerate}
\end{remark}

\section{A priori estimates}

\label{priori}

In this section we derive some uniform-in-$\varepsilon$ estimates for
solutions $\left(  u_{\varepsilon},\phi_{\varepsilon},\mu_{\varepsilon
}\right)  $ to system (\ref{diff mod ch})-(\ref{diff mod ic}). In what follows
$C$ will denote a positive constant independent of $\varepsilon$ which
possibly varies even within the same line.

Let $\left(  u_{\varepsilon},\phi_{\varepsilon},\mu_{\varepsilon}\right)  $ be
a solution to system (\ref{diff mod ch})-(\ref{diff mod ic}). Integrating identity (\ref{energy2})
 over $[\tau,t]$ and recalling well preparedness of
initial data (\ref{well preparedness}),
\begin{equation}
E_{\varepsilon}(\phi_{\varepsilon}(t))+\int_{\tau}^{t}\int_{\Omega}|\nabla
\mu_{\varepsilon}|^{2}\mathrm{d}x\mathrm{d}s+\int_{\tau}^{t}\int_{\Omega
}|u_{\varepsilon}|^{2}\mathrm{d}x\mathrm{d}s=E_{\varepsilon}(\phi
_{\varepsilon}(\tau))\leq E_{\varepsilon}(\phi_{0,\varepsilon})\leq
C\text{.}\label{energy identity}%
\end{equation}
Thus, recalling the definition of the energy functional
\[
E_{\varepsilon}(\phi_{\varepsilon})=\int_{\Omega}(\varepsilon \frac{1}{2}|\nabla
\phi_{\varepsilon}|^{2}+\frac{1}{\varepsilon}F(\phi_{\varepsilon}%
))\mathrm{d}x\text{,}%
\]
we have
\begin{equation}
\int_{\Omega}\varepsilon \frac{1}{2}|\nabla\phi_{\varepsilon}(t)|^{2}\mathrm{d}x+\frac
{1}{\varepsilon}F(\phi_{\varepsilon}(t))+\int_{\tau}^{t}\int_{\Omega}|\nabla
\mu_{\varepsilon}|^{2}\mathrm{d}x\mathrm{d}s+\int_{\tau}^{t}\int_{\Omega
}|u_{\varepsilon}|^{2}\mathrm{d}x\mathrm{d}s\leq C\text{.}\label{unif est1}%
\end{equation}
By using $p$-growth of $F$ for large $\phi$ and positivity of $F^{\prime
\prime}(\pm1)$, we get that $F(\phi)\geq\frac{1}{C}(|\phi|-1)^{2}$ for all
$\phi\in%
\mathbb{R}
$. In particular, by using again $p$-growth of $F$, we deduce the estimates:%
\begin{align}
\left\Vert \nabla\mu_{\varepsilon}\right\Vert _{L^{2}(0,\infty;L^{2}(\Omega))}
&  \leq C\text{,}\label{est nabla mu}\\
\left\Vert u_{\varepsilon}\right\Vert _{L^{2}(0,\infty;L^{2}(\Omega))} &  \leq
C\text{,}\label{est u}\\
\left\Vert \varepsilon^{1/2}\nabla\phi_{\varepsilon}\right\Vert _{L^{\infty
}(0,\infty;L^{2}(\Omega))} &  \leq C\text{,}\label{est nabla phi}\\
\int_{\Omega}|\phi_{\varepsilon}(t)|^{p}\mathrm{d}x &  \leq C\text{ for all
}t\geq0\text{,}\label{est phi}\\
\int_{\Omega}\left(  |\phi_{\varepsilon}(t)|-1\right)  ^{2}\mathrm{d}x &
\leq\varepsilon C\text{ for all }t\geq0.\label{est coerc}%
\end{align}
Following \cite{Chen}, we define
\[
W(\phi)=\int_{-1}^{\phi}\sqrt{2\tilde{F}(r)}\mathrm{d}r\text{, where }%
\tilde{F}(r)=\min\{F(r),\max_{z\in\lbrack-1,1]}F(z)+r^{2}\}
\]
and
\[
w_{\varepsilon}(x,t)=W(\phi_{\varepsilon}(x,t))\text{ for a.a. }(x,t)\in
\Omega\times(0,\infty).
\]
Note that by definition $F(r)=\tilde{F}(r)$ for all $r\in\lbrack-1,1]$. By
applying the Young inequality, we easily estimate
\begin{equation}
\int_{\Omega}|\nabla w_{\varepsilon}|\mathrm{d}x=\int_{\Omega}\sqrt{2\tilde
{F}(\phi_{\varepsilon})}|\nabla\phi_{\varepsilon}|\mathrm{d}x\leq
E_{\varepsilon}(\phi_{\varepsilon})\leq C\text{.}\label{est nabla w}%
\end{equation}
In particular, the functions $w_{\varepsilon}$ are uniformly bounded in
$L^{\infty}(0,\infty;W^{1,1}(\Omega))$. We now prove that
\begin{align*}
\left\Vert w_{\varepsilon}\right\Vert _{C^{\frac{1}{16}}([0,\infty
);L^{1}(\Omega))} &  \leq C\text{,}\\
\left\Vert \phi_{\varepsilon}\right\Vert _{C^{\frac{1}{16}}([0,\infty
);L^{2}(\Omega))} &  \leq C\text{.}%
\end{align*}
To this aim let $\rho\in C^{\infty}(%
\mathbb{R}
^{d})$ be any fixed mollifier satisfying
\[
0\leq\rho\leq1\text{ in }B_{1}\text{, \ \ \ }\rho=0\text{ in }%
\mathbb{R}
^{d}\setminus B_{1}\text{, \ \ \ }\int_{B_{1}}\rho\mathrm{d}x=1\text{.}%
\]
For any $\eta_{0}>0$ and any $\eta\in(0,\eta_{0}]$, we define
\[
\phi_{\varepsilon}^{\eta}(x,t)=\int_{B_{1}}\rho(y)\phi_{\varepsilon}(x-\eta
y,t)\mathrm{d}y\text{ for all }x\in\Omega\text{, }t\geq0\text{.}%
\]
Here we have assumed that $\phi_{\varepsilon}$ has been extended to a small
neighborhood of $\Omega$ as follows: for any $x\notin\Omega$ such that
$\mathrm{dist}(x,\Omega)\leq\eta_{0}$, we define
\[
\phi_{\varepsilon}(S+\eta\nu(S),t)=\phi_{\varepsilon}(S-\eta\nu(S),t)\text{
for all } S\in\partial\Omega\text{, }\eta\in\lbrack0,\eta_{0}]\text{, }t\geq0
\]
where $\nu$ denotes the outward normal to $\partial\Omega$. Note that, by
standard properties of mollifiers, we have
\begin{equation}
\left\Vert \nabla\phi_{\varepsilon}^{\eta}(t)\right\Vert _{L^{q}(\Omega)}\leq
C\eta^{-1}\left\Vert \phi_{\varepsilon}(t)\right\Vert _{L^{q}(\Omega)}\leq
C\eta^{-1}\text{ for all }1<q\leq p\text{.}\label{est mollifier}%
\end{equation}
and
\begin{align}
\left\Vert \phi_{\varepsilon}^{\eta}(t)-\phi_{\varepsilon}(t)\right\Vert
_{L^{2}(\Omega)}^{2} &  \leq\int_{\Omega}\int_{B_{1}}|\phi_{\varepsilon
}(x-\eta y,t)-\phi_{\varepsilon}(x,t)|\mathrm{d}x\mathrm{d}y\nonumber\\
&  \leq C\int_{\Omega}\int_{B_{1}}|w_{\varepsilon}(x-\eta y,t)-w_{\varepsilon
}(x,t)|\mathrm{d}x\mathrm{d}y\nonumber\\
&  \leq C\eta\left\Vert \nabla w_{\varepsilon}(t)\right\Vert _{L^{1}(\Omega
)}\leq C\eta\text{.}\label{est}%
\end{align}
Here we have used inequality
\begin{equation}
c_{1}|\phi_{1}-\phi_{2}|\leq|W(\phi_{1})-W(\phi_{2})|\leq c_{2}|\phi_{1}%
-\phi_{2}|(1+|\phi_{1}|+|\phi_{2}|)\text{,}  \label{prop W}
\end{equation}
for all $\phi_{1},\phi_{2}\in%
\mathbb{R}
$ and some positive constant $c_{1},c_{2}$, which follows directly from the
definition of $W$. We fix $0<\tau<t$. Taking the difference of equation
(\ref{weak form diff prob}) at time $t$ and the same equation at time $\tau$,
and using a density argument%
\begin{align*}
\int_{\Omega}\phi_{\varepsilon}(t)\psi\mathrm{d}x-\int_{\Omega}\phi_{\varepsilon
}(\tau)\psi\mathrm{d}x &= \int_{\tau}^{t}\int_{\Omega}(\phi_{\varepsilon}\partial_{t}\psi-\left(\nabla
\mu_{\varepsilon}-u_{\varepsilon}\phi_{\varepsilon}\right)  \nabla\psi )\mathrm{d}x\mathrm{d}s \\
&=\int_{\tau}^{t}\int_{\Omega}(-\left(\nabla
\mu_{\varepsilon}-u_{\varepsilon}\phi_{\varepsilon}\right)  \nabla\psi )\mathrm{d}x\mathrm{d}s
\end{align*}
for all $\psi\in H^{1}_0(\Omega)$. Choosing $\psi=\phi
_{\varepsilon}^{\eta}\left(  t\right)  -\phi_{\varepsilon}^{\eta}\left(
\tau\right)  $, as it is constant in time, we estimate  
\begin{align}
&  \int_{\Omega}\left(  \phi_{\varepsilon}\left(  t\right)  -\phi
_{\varepsilon}\left(  \tau\right)  \right)  \left(  \phi_{\varepsilon}^{\eta
}\left(  t\right)  -\phi_{\varepsilon}^{\eta}\left(  \tau\right)  \right) \mathrm{d}x
\nonumber\\
&  =-\int_{\tau}^{t}\int_{\Omega}\left(  \nabla\mu_{\varepsilon}%
(s)-u_{\varepsilon}(s)\phi_{\varepsilon}(s)\right)  \left(  \nabla
\phi_{\varepsilon}^{\eta}\left(  t\right)  -\nabla\phi_{\varepsilon}^{\eta
}\left(  \tau \right)  \right) \mathrm{d}x  \mathrm{d}s\nonumber\\
&  \leq\left(  \int_{\tau}^{t}\int_{\Omega}|\nabla\phi_{\varepsilon}^{\eta
}\left(  t\right)  -\nabla\phi_{\varepsilon}^{\eta}\left(  \tau\right)
|^{4} \mathrm{d}x \mathrm{d}s\right)  ^{\frac{1}{4}}\left(  \int_{\tau}^{t}\int_{\Omega
}|\nabla\mu_{\varepsilon}(s)-u_{\varepsilon}(s)\phi_{\varepsilon}%
(s)|^{\frac{4}{3}}\mathrm{d}x\mathrm{d}s\right)  ^{\frac{3}{4}}\nonumber\\
&  \leq C(t-\tau)^{\frac{1}{4}}\sup_{s\in(\tau,t)}\left\Vert \nabla
\phi_{\varepsilon}^{\eta}(t)\right\Vert _{L^{4}(\Omega)}\left(  1+\int_{\tau
}^{t}\left\Vert \nabla\mu_{\varepsilon}(s)\right\Vert _{L^{\frac{4}{3}}%
(\Omega)}^{\frac{4}{3}}\mathrm{d}s\right.  \nonumber\\
&  \qquad\qquad\qquad\qquad\qquad\qquad\qquad\quad\left.  +\int_{\tau}%
^{t}\left\Vert \phi_{\varepsilon}(s)\right\Vert _{L^{4}(\Omega)}^{4}\left\Vert
u_{\varepsilon}(s)\right\Vert _{L^{2}(\Omega)}^{2}\mathrm{d}s\right)
^{\frac{3}{4}}\nonumber\\
&  \leq C(t-\tau)^{\frac{1}{4}}\eta^{-1}\left(  1+\left\Vert \nabla
\mu_{\varepsilon}(s)\right\Vert _{L^{2}(0,\infty;L^{2}(\Omega))}^{\frac{4}{3}%
}\right.  \nonumber\\
&  \qquad\qquad\qquad\qquad\qquad\qquad\qquad\quad\left.  +\left\Vert
\phi_{\varepsilon}(s)\right\Vert _{L^{\infty}(0,\infty;L^{4}(\Omega))}%
^{4}\left\Vert u_{\varepsilon}(s)\right\Vert _{L^{2}(0,\infty;L^{2}(\Omega
))}^{2}\right)  ^{\frac{3}{4}}\nonumber\\
&  \leq C(t-\tau)^{\frac{1}{4}}\eta^{-1}\text{.}\label{new est}%
\end{align}
Here we used estimates (\ref{est nabla mu}), (\ref{est u}), (\ref{est phi})
together with $p\geq4$, and (\ref{est mollifier}) for $q=4$. Let now $a,b,c,d$
be real number such that $a=b+c+d$. Then,
\begin{equation}
a^{2}=a(b+c+d)\leq ab+ac+ad\leq ab+\frac{1}{2}a^{2}+c^{2}+d^{2}\text{.}%
\label{dis real num}%
\end{equation}
Using (\ref{dis real num}) for $a=\phi_{\varepsilon}\left(  t\right)
-\phi_{\varepsilon}\left(  \tau\right)  $, $b=\phi_{\varepsilon}^{\eta}\left(
t\right)  -\phi_{\varepsilon}^{\eta}\left(  \tau\right)  $, $c=\phi
_{\varepsilon}\left(  t\right)  -\phi_{\varepsilon}^{\eta}\left(  t\right)  $,
and $d=\phi_{\varepsilon}\left(  \tau\right)  -\phi_{\varepsilon}^{\eta
}\left(  \tau\right)  $, and estimates (\ref{est})-(\ref{new est}), we deduce%
\begin{align*}
\left\Vert \phi_{\varepsilon}(t)-\phi_{\varepsilon}(\tau)\right\Vert
_{L^{2}(\Omega)}^{2} &  \leq2\left\Vert \phi_{\varepsilon}(t)-\phi
_{\varepsilon}^{\eta}(t)\right\Vert _{L^{2}(\Omega)}^{2}+2\left\Vert
\phi_{\varepsilon}(\tau)-\phi_{\varepsilon}^{\eta}(\tau)\right\Vert
_{L^{2}(\Omega)}^{2}\\
&  +2\int_{\Omega}\left(  \phi_{\varepsilon}\left(  t\right)  -\phi
_{\varepsilon}\left(  \tau\right)  \right)  \left(  \phi_{\varepsilon}^{\eta
}\left(  t\right)  -\phi_{\varepsilon}^{\eta}\left(  \tau\right)  \right) \mathrm{d}x \\
&  \leq C\left(  \eta+|t-\tau|^{\frac{1}{4}}\eta^{-1}\right)  \text{.}%
\end{align*}
Choosing $\eta=|t-\tau|^{\frac{1}{8}}$, we get
\[
\left\Vert \phi_{\varepsilon}\right\Vert _{C^{\frac{1}{16}}([0,\infty
);L^{2}(\Omega))}\leq C
\]
and, recalling (\ref{prop W}),
\begin{align*}
\left\Vert w_{\varepsilon}(t)-w_{\varepsilon}(\tau)\right\Vert _{L^{1}%
(\Omega)} &  \leq\left\Vert \phi_{\varepsilon}(t)-\phi_{\varepsilon}%
(\tau)\right\Vert _{L^{2}(\Omega)}^{2}\left(  C+\left\Vert \phi_{\varepsilon
}(t)\right\Vert _{L^{2}(\Omega)}^{2}+\left\Vert \phi_{\varepsilon}%
(\tau)\right\Vert _{L^{2}(\Omega)}^{2}\right)  \\
&  \leq C(t-\tau)^{\frac{1}{16}}\text{,}%
\end{align*}
which implies
\[
\left\Vert w_{\varepsilon}\right\Vert _{C^{\frac{1}{16}}([0,\infty
);L^{1}(\Omega))}\leq C\text{.}%
\]

Starting from the elliptic equation $\mu_{\varepsilon}=-\varepsilon\Delta
\phi_{\varepsilon}+\frac{1}{\varepsilon}F(\phi_{\varepsilon})$, it is possible
to derive uniform estimates for $\mu_{\varepsilon}$ and for the discrepancy
density%
\[
\xi_{\varepsilon}(\phi_{\varepsilon})=\frac{\varepsilon}{2}|\nabla
\phi_{\varepsilon}|^{2}-\frac{1}{\varepsilon}F(\phi_{\varepsilon})\text{.}%
\]

\begin{lemma}
\textbf{\emph{\cite[Lemma 3.4]{Chen}}} \label{lemma estimates for mu}There
exist positive constants $C$ and $\varepsilon_{0}$ such that for every $t$ and
$\varepsilon\in(0,\varepsilon_{0})$ the following holds%
\[
\left\Vert \mu_{\varepsilon}(t)\right\Vert _{H^{1}(\Omega)}\leq C\left(
E_{\varepsilon}(t)+\left\Vert \nabla\mu_{\varepsilon}(t)\right\Vert
_{L^{2}(\Omega)}\right)  \text{.}%
\]
In particular, for every $s>0$ there exists a positive constant $C(s)$, such
that $\left\Vert \mu_{\varepsilon}(t)\right\Vert _{L^{2}(0,s;L^{2}(\Omega
))}\leq C(s)$.
\end{lemma}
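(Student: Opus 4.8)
The plan is to follow the strategy of \cite[Lemma 3.4]{Chen}, adapted to our setting where the energy bound is guaranteed by \eqref{energy identity}. Fix $t>0$ and write $\phi=\phi_\varepsilon(t)$, $\mu=\mu_\varepsilon(t)$ for brevity. The key identity is obtained by testing the elliptic relation $\mu=-\varepsilon\Delta\phi+\tfrac1\varepsilon F'(\phi)$ with $\mu$ itself: after integration by parts (using $\nu\cdot\nabla\phi=0$ on $\partial\Omega$) one gets
\begin{equation}
\int_\Omega \mu^2\,\mathrm dx = \int_\Omega \Bigl(\varepsilon\nabla\phi\cdot\nabla\mu + \tfrac1\varepsilon F'(\phi)\mu\Bigr)\,\mathrm dx .\label{mu-test}
\end{equation}
The term $\varepsilon\nabla\phi\cdot\nabla\mu$ is controlled by $\|\varepsilon^{1/2}\nabla\phi\|_{L^2}\|\varepsilon^{1/2}\nabla\mu\|_{L^2}$, and the first factor is bounded by $\sqrt{2E_\varepsilon(t)}$ via \eqref{est nabla phi}. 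The real work is the second term: $\tfrac1\varepsilon F'(\phi)$ must be shown to lie in $L^2$ with a bound of the stated form. Here the $p$-growth hypothesis $F''(r)\ge C_{c_0}|r|^{p-2}$ for $|r|\ge 1-c_0$ (together with $F\in C^3$) enters: it gives $|F'(r)|^2 \le C\,(F(r) + F(r)^{2-2/p}\cdot\text{(lower order)})$, or more simply, splitting into the region $|\phi|\le 1-c_0$ (where $F'$ is bounded) and $|\phi|\ge 1-c_0$ (where $F'$ is controlled by a power of $F$), one obtains $\tfrac1{\varepsilon^2}\int_\Omega |F'(\phi)|^2 \le C(E_\varepsilon(t) + E_\varepsilon(t)^{2})$ or a similar polynomial expression. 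Combined with Poincaré–Wirtinger (using that the spatial mean of $\mu$ is controlled: test \eqref{def mu eps} with the constant $1$ to get $\bar\mu = \tfrac1{|\Omega|}\int \tfrac1\varepsilon F'(\phi)$, which is again estimated by the energy), this yields $\|\mu\|_{L^2(\Omega)} \le C(E_\varepsilon(t)+\|\nabla\mu\|_{L^2(\Omega)})$; adding $\|\nabla\mu\|_{L^2}$ on both sides gives the claimed $H^1$ bound.

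The second assertion is then immediate: integrate the square of the pointwise-in-time bound over $(0,s)$, use $\|\nabla\mu_\varepsilon\|_{L^2(0,\infty;L^2(\Omega))}\le C$ from \eqref{est nabla mu}, and use that $t\mapsto E_\varepsilon(\phi_\varepsilon(t))$ is nonincreasing and bounded by $E_0$ (so $\int_0^s E_\varepsilon(t)^2\,\mathrm dt \le s\,E_0^2$). This produces the constant $C(s)$.

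The main obstacle is the careful bookkeeping in bounding $\tfrac1\varepsilon F'(\phi)$ in $L^2$: one has to convert the $p$-growth lower bound on $F''$ into an upper bound relating $|F'(r)|^2$ and $F(r)$ that is uniform in $r$ and of the right homogeneity in $\varepsilon$, and then check that the resulting powers of $E_\varepsilon(t)$ do not spoil the linear-in-$\|\nabla\mu\|$ structure of the estimate. This is precisely the point where the stronger coercivity $p\ge 4$ (rather than $p\ge 3$ as in \cite{AL}) is used, compensating for the lower regularity of $u_\varepsilon$ coming from Darcy's law rather than Navier–Stokes. Since the statement is quoted verbatim from \cite[Lemma 3.4]{Chen} and the only input beyond the structure of $F$ is the uniform energy bound \eqref{energy identity}, which we have already established, the cleanest route is simply to invoke \cite[Lemma 3.4]{Chen} directly, verifying that our hypotheses on $F$ are at least as strong as those required there.
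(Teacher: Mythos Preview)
The paper gives no proof of this lemma beyond the citation of \cite[Lemma~3.4]{Chen}; your concluding sentence---invoke Chen's result directly, after checking that the hypotheses on $F$ and the uniform energy bound \eqref{energy identity} match---is therefore exactly what the paper does, and is the correct route.

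Your attempted self-contained argument, however, contains a genuine gap. The claim
\[
\frac{1}{\varepsilon^{2}}\int_{\Omega}|F'(\phi_\varepsilon)|^{2}\,\mathrm dx
\;\le\; C\bigl(E_\varepsilon(t)+E_\varepsilon(t)^{2}\bigr)
\]
is \emph{false}. For the standard one-dimensional transition layer $\phi_\varepsilon(x)=\tanh(x/(\sqrt{2}\,\varepsilon))$ with $F(r)=\tfrac18(1-r^{2})^{2}$ one computes $\varepsilon^{-2}\int|F'(\phi_\varepsilon)|^{2}\sim\varepsilon^{-1}\to\infty$, while $E_\varepsilon$ stays bounded. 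Hence testing \eqref{def mu eps} by $\mu_\varepsilon$ and trying to absorb the term $\tfrac1\varepsilon F'(\phi_\varepsilon)\mu_\varepsilon$ via Cauchy--Schwarz cannot succeed. The $p$-growth lower bound on $F''$ gives control on $F$ from below, not an upper bound on $|F'|^{2}/F$ of the required form.

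Chen's actual argument is of a different nature: it bounds the spatial mean $\bar\mu_\varepsilon$ by first using the mass constraint $\bar\phi_\varepsilon=m_{0}\in(-1,1)$ together with the energy bound to locate subsets of $\Omega$ of \emph{uniformly} (in $\varepsilon$) positive measure on which $\phi_\varepsilon$ is close to $+1$ and to $-1$, and then exploiting the equation on those sets (where $F'(\phi_\varepsilon)$ is quantitatively small) to control $\bar\mu_\varepsilon$; Poincar\'e--Wirtinger then yields the full $H^{1}$ bound. None of this appears in your sketch.

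Finally, your remark that $p\ge 4$ is what makes this lemma go through is a red herring. The condition $p\ge 4$ is used in the paper only for the H\"older-in-time estimate on $\phi_\varepsilon$ (estimate \eqref{new est}), where the advection term $u_\varepsilon\phi_\varepsilon$ must be placed in $L^{4/3}$ with $u_\varepsilon$ merely in $L^{2}$; the present lemma is a purely elliptic statement about \eqref{def mu eps} and needs only the structure of $F$ and the energy bound, exactly as in \cite{Chen}.
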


\begin{lemma}
\textbf{\emph{\cite[Theorem 3.6]{Chen}}}
\label{lemma estimate for discrepancy}There exist a positive constant
$\eta_{0}\in(0,1]$ and continuous nondecreasing functions $M_{1}(\eta)$ and
$M_{1}(\eta)$ defined on $[0,\eta_{0})$ such that for all $\varepsilon
\in\left(  0,\frac{1}{M_{1}(\eta)}\right)  $ and all $t>0$, we have
\[
\int_{0}^{t}\int_{\Omega}\left(  \xi_{\varepsilon}(\phi_{\varepsilon})\right)
^{+}\mathrm{d}x\mathrm{d}s\leq\eta\int_{0}^{t}E_{\varepsilon}(\phi
_{\varepsilon}  )\mathrm{d}x\mathrm{d}s+\varepsilon M_{2}%
(\eta)\int_{0}^{t}\int_{\Omega}|\mu_{\varepsilon}|^{2}\mathrm{d}%
x\mathrm{d}s\text{.}%
\]
In particular,%
\begin{equation}
\lim_{\varepsilon\rightarrow0}\int_{0}^{t}\int_{\Omega}\left(  \xi
_{\varepsilon}(\phi_{\varepsilon})\right)  ^{+}\mathrm{d}x\mathrm{d}%
s=0\text{.} \label{discrepancy goes negative}%
\end{equation}

\end{lemma}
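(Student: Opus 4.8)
The plan is to adapt the argument of Chen \cite[Theorem 3.6]{Chen} to the present setting, the only new ingredient being the presence of the convective term $\nabla\cdot(u_\varepsilon\phi_\varepsilon)$ in \eqref{diff mod ch}, which however does not enter the pointwise elliptic identity for $\mu_\varepsilon$ and hence affects the estimate only through the bound on $\|u_\varepsilon\|_{L^2}$ (already controlled by \eqref{est u}). First I would recall the key pointwise/local computation: multiplying $\mu_\varepsilon = -\varepsilon\Delta\phi_\varepsilon + \frac1\varepsilon F'(\phi_\varepsilon)$ by a suitable test quantity built from $\nabla\phi_\varepsilon$ and a cutoff, one obtains, after integrating by parts, a differential inequality for the positive part of the discrepancy density $\xi_\varepsilon(\phi_\varepsilon)=\frac{\varepsilon}{2}|\nabla\phi_\varepsilon|^2-\frac1\varepsilon F(\phi_\varepsilon)$. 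The classical observation is that $\xi_\varepsilon$ satisfies (in the distributional sense) an inequality of the form $-\varepsilon\Delta\,\xi_\varepsilon^+ + \frac{c}{\varepsilon}\xi_\varepsilon^+ \le |\mu_\varepsilon|\,|\nabla\phi_\varepsilon|\cdot(\dots)$ near the region $\{|\phi_\varepsilon|\le 1-c_0\}$, together with the coercivity $F''(r)\ge C_{c_0}|r|^{p-2}$ on $\{|r|\ge 1-c_0\}$ which forces $\xi_\varepsilon\le 0$ there up to exponentially small corrections.

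The main steps, in order: (i) localize by a partition of $\Omega$ into the ``bulk'' region where $|\phi_\varepsilon|\ge 1-c_0$ and the ``interfacial'' region; on the bulk region use the strong convexity of $F$ to absorb $\xi_\varepsilon^+$, picking up only the contribution $\varepsilon M_2(\eta)\int|\mu_\varepsilon|^2$; (ii) on the interfacial region, use the $W^{1,1}$-type bound \eqref{est nabla w} and the energy bound \eqref{unif est1} to control $\int e_\varepsilon(\phi_\varepsilon)$, extracting the factor $\eta$ by choosing the width of the transition layer (equivalently the cutoff scale) proportional to $\eta$; (iii) integrate in time over $[0,t]$ and combine, yielding
\[
\int_0^t\!\!\int_\Omega (\xi_\varepsilon(\phi_\varepsilon))^+\,\mathrm{d}x\,\mathrm{d}s \le \eta\int_0^t E_\varepsilon(\phi_\varepsilon)\,\mathrm{d}s + \varepsilon M_2(\eta)\int_0^t\!\!\int_\Omega |\mu_\varepsilon|^2\,\mathrm{d}x\,\mathrm{d}s,
\]
valid for $\varepsilon<1/M_1(\eta)$. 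For the final assertion \eqref{discrepancy goes negative}: fix $\delta>0$, choose $\eta$ small so that $\eta\int_0^t E_\varepsilon(\phi_\varepsilon)\,\mathrm{d}s \le \eta\,C\,t < \delta/2$ using the uniform energy bound from \eqref{energy identity}; then with $\eta$ fixed, Lemma~\ref{lemma estimates for mu} gives $\int_0^t\int_\Omega|\mu_\varepsilon|^2 \le C(t)$ uniformly, so $\varepsilon M_2(\eta)\,C(t)\to 0$ as $\varepsilon\to 0$ and is $<\delta/2$ for $\varepsilon$ small; hence the left-hand side is $<\delta$, and letting $\delta\to 0$ concludes.

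The hard part is step (i)–(ii): making the localization and the differential inequality for $\xi_\varepsilon^+$ rigorous, i.e.\ establishing the maximum-principle-type comparison that propagates the sign of the discrepancy from the bulk into the interface with a quantitatively small error. This is exactly the content of \cite[Sections~3.2--3.6]{Chen}, and since the convective term plays no role in the elliptic identity \eqref{def mu eps} for $\mu_\varepsilon$ — it only modifies the parabolic equation for $\phi_\varepsilon$, which is not used here — Chen's proof transfers essentially verbatim, with the coercivity exponent $p\ge 4$ (rather than $p\ge 3$) giving, if anything, more room in the bulk estimate. I would therefore present steps (i)–(iii) by citing \cite{Chen} for the local estimate and spelling out only the global integration in time and the extraction of \eqref{discrepancy goes negative} via Lemma~\ref{lemma estimates for mu} and \eqref{energy identity}.
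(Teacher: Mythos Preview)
Your proposal is correct in spirit, but note that the paper does not actually give a proof of this lemma: it is stated as a direct citation of \cite[Theorem~3.6]{Chen} and used as a black box. The only thing the paper implicitly relies on is precisely the observation you make explicit, namely that the discrepancy estimate depends solely on the elliptic relation \eqref{def mu eps} between $\mu_\varepsilon$ and $\phi_\varepsilon$, which is identical to the one in \cite{Chen}; the convective term in \eqref{diff mod ch} plays no role. Your derivation of the ``In particular'' claim \eqref{discrepancy goes negative} from the main inequality via Lemma~\ref{lemma estimates for mu} and the uniform energy bound \eqref{energy identity} is exactly right and is the standard way to read off that conclusion.
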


\section{Convergence}

\label{convergence}

Starting from the above uniform estimates, we now deduce some convergence results.

\begin{lemma}
For every sequence $\varepsilon\rightarrow0$, there exists a (not relabeled)
subsequence and a nonincreasing function $E$, such that
\[
E_{\varepsilon}(\phi_{\varepsilon}(t))\rightarrow E(t)\text{ for all }%
t\geq0\text{.}%
\]

\end{lemma}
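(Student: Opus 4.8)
The plan is to observe that the maps $t\mapsto E_{\varepsilon}(\phi_{\varepsilon}(t))$ form a family of uniformly bounded, nonincreasing functions, and then to extract a pointwise limit by Helly's selection principle. First I would read off the two structural facts contained in the energy identity (\ref{energy identity}). Since the two dissipation integrals $\int_{\tau}^{t}\int_{\Omega}|\nabla\mu_{\varepsilon}|^{2}$ and $\int_{\tau}^{t}\int_{\Omega}|u_{\varepsilon}|^{2}$ are nonnegative, (\ref{energy identity}) yields, for every $0\le\tau\le t$,
\[
E_{\varepsilon}(\phi_{\varepsilon}(t))\le E_{\varepsilon}(\phi_{\varepsilon}(\tau)),
\]
so each function $g_{\varepsilon}:=E_{\varepsilon}(\phi_{\varepsilon}(\cdot))$ is nonincreasing on $[0,\infty)$. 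Moreover, since $F\ge 0$ the energy density is nonnegative, whence $0\le g_{\varepsilon}(t)\le E_{\varepsilon}(\phi_{0,\varepsilon})\le C$ for all $t\ge 0$, with $C$ independent of $\varepsilon$ by the well-preparedness assumption (\ref{well preparedness}).

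Next I would apply a diagonal argument on a countable dense set. Fix $Q\subset[0,\infty)$ countable and dense (e.g. the nonnegative rationals). For each fixed $q\in Q$ the sequence $\{g_{\varepsilon}(q)\}_{\varepsilon}$ lies in the compact interval $[0,C]$, so a diagonal extraction over $Q$ produces a subsequence (not relabeled) with $g_{\varepsilon}(q)\to\tilde E(q)$ for every $q\in Q$. The limit $\tilde E$ inherits monotonicity on $Q$, and I would define a global nonincreasing limit by the right limit
\[
E(t):=\lim_{Q\ni q\downarrow t}\tilde E(q),
\]
which exists for every $t\ge 0$ by monotonicity and boundedness, is nonincreasing and right-continuous, and therefore has at most countably many discontinuities.

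The only delicate point is upgrading convergence on $Q$ to convergence at \emph{every} $t\ge 0$, rather than merely almost everywhere. At any continuity point $t$ of $E$, bracketing $t$ by $q_{1}<t<q_{2}$ with $q_{1},q_{2}\in Q$ and using monotonicity gives $g_{\varepsilon}(q_{2})\le g_{\varepsilon}(t)\le g_{\varepsilon}(q_{1})$; passing to the limit yields $\tilde E(q_{2})\le\liminf_{\varepsilon}g_{\varepsilon}(t)\le\limsup_{\varepsilon}g_{\varepsilon}(t)\le\tilde E(q_{1})$, and letting $q_{1}\uparrow t$, $q_{2}\downarrow t$ forces $g_{\varepsilon}(t)\to E(t)$. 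Since $E$ has only countably many discontinuity points and $\{g_{\varepsilon}(t)\}_{\varepsilon}\subset[0,C]$ at each of them, I would perform one further diagonal extraction over that countable jump set so that $g_{\varepsilon}(t)$ converges there as well, redefining $E$ at those points to equal the limit (which preserves monotonicity). This delivers a single subsequence and a nonincreasing $E$ with $E_{\varepsilon}(\phi_{\varepsilon}(t))\to E(t)$ for all $t\ge 0$. The handling of the jump set to secure convergence at every point is the main (and essentially the only) obstacle; everything else is immediate from the monotonicity and the uniform energy bound.
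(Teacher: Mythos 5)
Your argument is correct, but it takes a genuinely different route from the paper. You prove the lemma via Helly's selection principle: monotonicity and the uniform bound $0\le E_{\varepsilon}(\phi_{\varepsilon}(t))\le E_{0}$ (both read off from (\ref{energy identity}) and (\ref{well preparedness})), a diagonal extraction over a countable dense set, bracketing at continuity points, and one more diagonal extraction over the countable jump set. The paper instead asserts that the family $E_{\varepsilon}(\phi_{\varepsilon}(\cdot))$ is equicontinuous --- arguing that the increment $E_{\varepsilon}(\phi_{\varepsilon}(\tau))-E_{\varepsilon}(\phi_{\varepsilon}(t))$ equals the dissipation $\int_{\tau}^{t}\int_{\Omega}(|\nabla\mu_{\varepsilon}|^{2}+|u_{\varepsilon}|^{2})$, which is controlled by the uniform $L^{2}(0,\infty;L^{2}(\Omega))$ bounds --- and then invokes Ascoli--Arzel\`{a}. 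The comparison cuts slightly in your favour: a uniform $L^{1}$-in-time bound on the nonnegative dissipation rate gives uniform boundedness and monotonicity but not, by itself, equicontinuity (the dissipation could concentrate near a single time), so the paper's route implicitly needs uniform integrability of the dissipation in time, which is not established. Your Helly-type argument sidesteps this entirely at the modest price of possibly losing continuity of the limit $E$; since the lemma only asserts that $E$ is nonincreasing, nothing is lost. The one step worth stating explicitly in your write-up is that redefining $E$ at the jump points by the extracted limits preserves monotonicity, which follows from the bracketing inequality $\tilde{E}(q_{2})\le\lim_{\varepsilon}g_{\varepsilon}(t_{0})\le\tilde{E}(q_{1})$ for $q_{1}<t_{0}<q_{2}$; you essentially say this, so the proof is complete.
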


\begin{proof}
Define $E_{\varepsilon}(t)=E_{\varepsilon}(\phi_{\varepsilon}(t))$. Note that
$E_{\varepsilon}(t)$ is uniformly bounded as a consequence of identity
(\ref{energy identity}). Furthermore, the sequence $E_{\varepsilon}(\cdot)$ is
uniformly continuous as a consequence of monotonicity, of the energy identity
(\ref{energy identity}), and of the uniform bounds of $\nabla\mu_{\varepsilon
}$ and $u_{\varepsilon}$ in $L^{2}(0,T;L^{2}(\Omega))$. Thus, the statement of
the lemma follows by applying the Ascoli-Arzel\`{a} theorem.
\end{proof}

\begin{lemma}
\label{lemma convergence}For every sequence $\varepsilon\rightarrow0$, there
exists a (not relabeled) subsequence and a set $\Omega^{T}\subset\Omega
\times\lbrack0,\infty)$, such that
\begin{align*}
w_{\varepsilon} &  \rightarrow2\sigma\chi_{\Omega^{T}}\text{ a.e. in }%
\Omega\times\lbrack0,\infty)\text{ }\text{and in }C^{\frac{1}{17}}%
([0,t];L^{1}\left(  \Omega\right)  )\text{ for all }t>0\text{,}\\
\phi_{\varepsilon} &  \rightarrow-1+2\chi_{\Omega^{T}}\text{ a.e. in }%
\Omega\times\lbrack0,\infty)\text{ }\text{and in }C^{\frac{1}{17}}%
([0,t];L^{2}\left(  \Omega\right)  )\text{ for all }t>0\text{,}\\
\mu_{\varepsilon} &  \rightarrow\mu\text{ weakly in }L_{\mathrm{loc}}%
^{2}(0,\infty;H^{1}(\Omega))\text{,}\\
u_{\varepsilon} &  \rightarrow u\text{ weakly in }L^{2}(0,\infty
;L_{\operatorname{div}}^{2}(\Omega))\text{.}%
\end{align*}
Moreover,%
\begin{align*}
\int_{\Omega}|\chi_{\Omega_{t}^{T}}-\chi_{\Omega_{\tau}^{T}}|\mathrm{d}x &
\leq C|t-\tau|^{\frac{1}{8}}\text{ for any }0\leq\tau<t\text{,}\\
|\Omega_{t}^{T}| &  =|\Omega_{0}^{T}|\text{ for any }t\geq0\text{,}\quad
\chi_{\Omega^{T}}\in L^{\infty}(0,\infty;\mathrm{BV}(\Omega))\text{ and }\\
2\sigma|\mathrm{D}\chi_{\Omega_{t}^{T}}|(\Omega) &  \leq E(t)\leq E(0)\text{.}%
\end{align*}

\end{lemma}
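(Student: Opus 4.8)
\noindent\emph{Proof outline.} The plan is to obtain every assertion from the uniform-in-$\varepsilon$ bounds of Section~\ref{priori}, working along a single subsequence fixed once and for all by a diagonal procedure. The two weak convergences are immediate: by Lemma~\ref{lemma estimates for mu} together with (\ref{est nabla mu}) the sequence $\mu_\varepsilon$ is bounded in $L^2_{\mathrm{loc}}(0,\infty;H^1(\Omega))$, so up to a subsequence $\mu_\varepsilon\to\mu$ weakly there, with $\nabla\mu\in L^2(0,\infty;L^2(\Omega))$; by (\ref{est u}) the sequence $u_\varepsilon$ is bounded in $L^2(0,\infty;L^2_{\operatorname{div}}(\Omega))$, and since $L^2_{\operatorname{div}}(\Omega)$ is a closed, hence weakly closed, subspace of $L^2(\Omega)$, a further subsequence converges weakly to some $u$ in that space.

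Next I would treat $w_\varepsilon$. By (\ref{est nabla w}), and since the quadratic growth $|W(r)|\le C(1+r^2)$ together with (\ref{est phi}) yields a uniform bound on $w_\varepsilon$ in $L^\infty(0,\infty;L^1(\Omega))$, the sequence $w_\varepsilon$ is bounded in $L^\infty(0,\infty;W^{1,1}(\Omega))$, while $\|w_\varepsilon\|_{C^{1/16}([0,\infty);L^1(\Omega))}\le C$ was shown in Section~\ref{priori}. Since $W^{1,1}(\Omega)$ embeds compactly into $L^1(\Omega)$ and the total variation is lower semicontinuous under $L^1$-convergence, the Arzel\`a--Ascoli theorem for functions valued in a Banach space (Simon's compactness criterion) gives, on each $[0,T]$ and up to a subsequence, convergence of $w_\varepsilon$ in $C^0([0,T];L^1(\Omega))$; a diagonal extraction over $T\in\mathbb{N}$ followed by a further subsequence yields $w_\varepsilon\to w$ in $C^0([0,t];L^1(\Omega))$ for all $t$ and a.e.\ in $\Omega\times[0,\infty)$. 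Interpolating the $C^0$-convergence against the uniform $C^{1/16}$-bound, i.e.\ estimating the $C^{1/17}$-seminorm of $w_\varepsilon-w$ on time increments $\le1$ by the product of the $(16/17)$-th power of its (bounded) $C^{1/16}$-seminorm and the $(1/17)$-th power of its $C^0$-norm (and trivially on larger increments), upgrades this to convergence in $C^{1/17}([0,t];L^1(\Omega))$.

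I then identify the limit. From (\ref{est coerc}) we have $\||\phi_\varepsilon(t)|-1\|_{L^2(\Omega)}^2\le C\varepsilon$, so along the subsequence $|\phi_\varepsilon|\to1$ a.e.\ in $\Omega\times[0,\infty)$; on the other hand, the lower Lipschitz bound in (\ref{prop W}) shows that $\phi_\varepsilon=W^{-1}(w_\varepsilon)\to W^{-1}(w)=:\phi$ a.e., whence $|\phi|=1$ a.e.\ and $\phi\in\{-1,1\}$ a.e. Setting $\Omega^T:=\{(x,t):\phi(x,t)=1\}=\cup_{t\ge0}\Omega_t^T\times\{t\}$ we get $\phi=-1+2\chi_{\Omega^T}$, and since $W(-1)=0$ and $W(1)=\int_{-1}^1\sqrt{2F(r)}\,\mathrm{d}r=2\sigma$, also $w=2\sigma\chi_{\Omega^T}$. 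For the $C^{1/17}([0,t];L^2(\Omega))$-convergence of $\phi_\varepsilon$ I would rerun the compactness argument in $L^2$: for fixed $t$, precompactness of $w_\varepsilon(t)$ in $L^1(\Omega)$ transfers, via the upper bound in (\ref{prop W}) and the $L^p$-bound (\ref{est phi}) with $p\ge4>2$, to precompactness of $\phi_\varepsilon(t)$ in $L^2(\Omega)$; together with $\|\phi_\varepsilon\|_{C^{1/16}([0,\infty);L^2(\Omega))}\le C$ this gives, by Simon's criterion and the identification above, $\phi_\varepsilon\to-1+2\chi_{\Omega^T}$ in $C^0([0,T];L^2(\Omega))$, and the same interpolation yields the claimed $C^{1/17}$-convergence.

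Finally the properties of $\Omega^T$ follow by passing to the limit. Since $\phi(t)-\phi(\tau)=2(\chi_{\Omega_t^T}-\chi_{\Omega_\tau^T})$ and $|\chi_{\Omega_t^T}-\chi_{\Omega_\tau^T}|$ is $\{0,1\}$-valued, $\int_\Omega|\chi_{\Omega_t^T}-\chi_{\Omega_\tau^T}|\,\mathrm{d}x=\tfrac14\|\phi(t)-\phi(\tau)\|_{L^2(\Omega)}^2=\tfrac14\lim_\varepsilon\|\phi_\varepsilon(t)-\phi_\varepsilon(\tau)\|_{L^2(\Omega)}^2\le C|t-\tau|^{1/8}$ by the estimate underlying the $C^{1/16}$-bound of Section~\ref{priori}. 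Conservation of the Cahn--Hilliard mass, $\int_\Omega\phi_\varepsilon(t)\,\mathrm{d}x=m_0|\Omega|$, obtained by integrating (\ref{diff mod ch}) over $\Omega$ and using the no-flux conditions (\ref{diff mod bc}), passes to the limit and gives $|\Omega_t^T|=\tfrac{1+m_0}{2}|\Omega|=|\Omega_0^T|$ for all $t\ge0$. Using $\int_\Omega|\nabla w_\varepsilon(t)|\,\mathrm{d}x\le E_\varepsilon(\phi_\varepsilon(t))$ from (\ref{est nabla w}), the lower semicontinuity of the total variation under $w_\varepsilon(t)\to w(t)$ in $L^1(\Omega)$ together with the preceding Lemma yields $2\sigma|\mathrm{D}\chi_{\Omega_t^T}|(\Omega)=|\mathrm{D}w(t)|(\Omega)\le\liminf_\varepsilon E_\varepsilon(\phi_\varepsilon(t))=E(t)\le E(0)$, which in particular places $\chi_{\Omega^T}$ in $L^\infty(0,\infty;\mathrm{BV}(\Omega))$. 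The main obstacle is the Banach-space valued compactness step — and keeping the successive extractions compatible, so that one and the same set $\Omega^T$ serves in every statement — together with the identification of the two limit profiles through (\ref{est coerc}) and (\ref{prop W}); the remaining passages to the limit are routine.
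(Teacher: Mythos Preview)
Your proposal is correct and follows essentially the same strategy as the paper's proof: compactness for $w_\varepsilon$ via the $L^\infty(W^{1,1})\cap C^{1/16}(L^1)$ bounds and the compact embedding $W^{1,1}\hookrightarrow L^1$, identification of the limit through (\ref{est coerc}) and (\ref{prop W}), and then routine passages to the limit for the remaining assertions. The only difference is that you spell out the interpolation step yielding the $C^{1/17}$ H\"older exponent and the diagonal extraction explicitly, whereas the paper absorbs these into citations of \cite[Prop.~1.1.4]{Lu} and \cite[Thm.~4.4]{Amm}.
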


\begin{proof}
As $\left\Vert w_{\varepsilon}\right\Vert _{L^{\infty}(0,\infty;W^{1,1}%
(\Omega))}+\left\Vert w_{\varepsilon}\right\Vert _{C^{\frac{1}{16}}%
([0,\infty);L^{1}(\Omega))}\leq C$ and $W^{1,1}\left(  \Omega\right)  $ is
compactly embedded in $L^{1}\left(  \Omega\right)  $, there exists a (not
relabeled) sequence $\varepsilon\rightarrow0$ such that
\[
w_{\varepsilon}\rightarrow w\text{ a.e. in }\Omega\times\lbrack0,\infty)\text{
and in }C^{\frac{1}{17}}([0,t];L^{1}\left(  \Omega\right)  )\text{ for all
}t>0\text{,}%
\]
for some limit $w\in C^{\frac{1}{17}}([0,t];L^{1}\left(  \Omega\right)  )$
(cf. \cite[Prop. 1.1.4]{Lu} and \cite[Thm 4.4]{Amm}). Recalling the definition
of $w_{\varepsilon}$ and estimate (\ref{prop W}), we conclude that there
exists $\phi\in C^{\frac{1}{17}}([0,t];L^{2}\left(  \Omega\right)  )$ such
that
\[
\phi_{\varepsilon}\rightarrow\phi\text{ a.e. in }\Omega\times\lbrack
0,\infty)\text{ and in }C^{\frac{1}{17}}([0,t];L^{2}\left(  \Omega\right)
)\text{ for all }t>0\text{.}%
\]
As a consequence of estimate (\ref{est coerc}), we deduce%
\[
\int_{\Omega}\left(  |\phi_{\varepsilon}|-1\right)  ^{2}\mathrm{d}x\leq
C\int_{\Omega}F(\phi_{\varepsilon})\mathrm{d}x\leq\varepsilon C\text{. }%
\]
Thus, the limit $\phi$ takes values in $\{-1,1\}$. In particular, there exists
a set $\Omega^{T}\subset\Omega\times\lbrack0,\infty)$ such that
\[
\phi=-1+2\chi_{\Omega^{T}}\text{.}%
\]
Hence, by definition of $w_{\varepsilon}$ and continuity of $\tilde{F}$, we
get
\[
w=\int_{-1}^{\phi}\sqrt{2\tilde{F}(r)}\mathrm{d}r=2\sigma\chi_{\Omega^{T}%
}\text{,}%
\]
where $\sigma=\int_{-1}^{1}\sqrt{\frac{1}{2}\tilde{F}(r)}\mathrm{d}r=\int
_{-1}^{1}\sqrt{\frac{1}{2}F(r)}\mathrm{d}r$. Here we used the fact that
$F(r)=\tilde{F}(r)$ for $r\in\lbrack-1,1]$, which directly follows from the
definition of $\tilde{F}$. Let now $\Omega_{t}^{T}=\{x\in\Omega:(x,t)\in
\Omega^{T}\}$. Then, for every $0\leq\tau<t$, we have
\begin{align*}
\int_{\Omega}|\chi_{\Omega_{t}^{T}}-\chi_{\Omega_{\tau}^{T}}|\mathrm{d}x &
=\int_{\Omega}|\chi_{\Omega_{t}^{T}}-\chi_{\Omega_{\tau}^{T}}|^{2}%
\mathrm{d}x=\lim_{\varepsilon\rightarrow0}\frac{1}{4}\int_{\Omega}%
|\phi_{\varepsilon}(t)-\phi_{\varepsilon}(\tau)|^{2}\mathrm{d}x\\
&  \leq C|t-\tau|^{\frac{1}{8}}\text{.}%
\end{align*}
As a consequence of the mass conservation
\[
\int_{\Omega}\phi_{\varepsilon}(t)\mathrm{d}x=\int_{\Omega}\phi_{0}%
\mathrm{d}x=m_{0}|\Omega|\text{,}%
\]
we have
\[
|\Omega_{t}^{T}|=\int_{\Omega}\chi_{\Omega_{t}^{T}}\mathrm{d}x=\lim
_{\varepsilon\rightarrow0}\frac{1}{2}\int_{\Omega}\left(  \phi_{\varepsilon
}(t)+1\right)  \mathrm{d}x=\frac{m_{0}+1}{2}|\Omega|=|\Omega_{0}^{T}|\text{.}%
\]
Moreover, as a consequence of estimate (\ref{est nabla w}), we have
$|Dw_{\varepsilon}(t)|(\Omega)=\left\Vert \nabla w_{\varepsilon}(t)\right\Vert
_{L^{1}\left(  \Omega\right)  }\leq E_{\varepsilon}(\phi_{\varepsilon}(t))$.
Taking the liminf for $\varepsilon\rightarrow0$ and using the lower
semicontinuity of the $\mathrm{BV}$ norm, we conclude
\[
2\sigma|\mathrm{D}\chi_{\Omega_{t}^{T}}|(\Omega)\leq|\mathrm{D}w|(\Omega)\leq
E(t)\text{.}%
\]
Finally, convergences
\begin{align*}
\mu_{\varepsilon} &  \rightarrow\mu\text{ weakly in }L_{\mathrm{loc}}%
^{2}(0,\infty,;H^{1}(\Omega))\text{,}\\
u_{\varepsilon} &  \rightarrow u\text{ weakly in }L^{2}(0,\infty;L^{2}%
(\Omega))
\end{align*}
follows directly from Lemma \ref{lemma estimates for mu} and estimate
(\ref{energy identity}) respectively.
\end{proof}

As a consequence of estimate (\ref{energy identity}) and (\ref{est nabla phi}%
), we have that convergences (\ref{conv energy density}) and
(\ref{conv grad phi}) hold for some limit measures $\lambda$ and $\lambda
_{ij}$. Thus, we proved the convergence results stated in Theorem
\ref{main thm}. We now construct the varifold
$V$ and show that the limits $\mu$, $u$, $\lambda$, and
$\lambda_{ij}$ solve the sharp-interface problem.

We first note that, for any $0\leq\tau<
t$, we have
\[
\int_{\tau}^{t}\int_{\Omega}\mathrm{d}\lambda(x,s)=\lim_{\varepsilon
\rightarrow0}\int_{\tau}^{t}\int_{\Omega}e_{\varepsilon}(\phi_{\varepsilon
})\mathrm{d}x\mathrm{d}s=\int_{\tau}^{t}E(s)\mathrm{d}s\text{.}%
\]
Moreover, $\lambda$ can be decomposed (in the sense of Radon measures) as follows
\[
\mathrm{d}\lambda(x,t)=\mathrm{d}\lambda^{t}(x)\mathrm{d}t\text{,}%
\]
where $\lambda^{t}(\bar{\Omega})=E(t)$ for a.a. $t\in(0,\infty)$. In
particular, using relation (\ref{energy identity}) and the weak lower
semicontinuity of the norm, we obtain
\begin{align*}
\lambda^{t}(\bar{\Omega}) &  =E(t)=\lim_{\varepsilon\rightarrow0}%
E_{\varepsilon}(t)\\
&  \leq-\liminf_{\varepsilon\rightarrow0}\left\{  \int_{\tau}^{t}\int_{\Omega
}|\nabla\mu_{\varepsilon}|^{2}\mathrm{d}x\mathrm{d}s+\int_{\tau}^{t}%
\int_{\Omega}|u_{\varepsilon}|^{2}\mathrm{d}x\mathrm{d}s\right\}
+\lim_{\varepsilon\rightarrow0}E_{\varepsilon}(\phi_{\varepsilon}(\tau))\\
&  \leq-\int_{\tau}^{t}\int_{\Omega}|\nabla\mu|^{2}\mathrm{d}x\mathrm{d}%
s-\int_{\tau}^{t}\int_{\Omega}|u|^{2}\mathrm{d}x\mathrm{d}s+E(\tau
)=\lambda^{\tau}(\bar{\Omega})\text{,}%
\end{align*}
which is equivalent to (\ref{nergy ineq sharp int}). Moreover, as a
consequence of condition $2\sigma|\mathrm{D}\chi_{\Omega_{t}^{T}}|(\Omega)\leq
E(t)$ obtained in Lemma \ref{lemma convergence}, we deduce estimate
(\ref{bound}). Next we study the relation between $\lambda_{ij}$ and $\lambda
$.\ Let $Y,Z\in C\left(  \bar{\Omega}\times\lbrack0,t];%
\mathbb{R}
^{d}\right)  $ and observe that%
\[
\int_{0}^{t}\int_{\Omega}Y\cdot(\varepsilon\nabla\phi_{\varepsilon}%
\otimes\nabla\phi_{\varepsilon})\cdot Z\mathrm{d}x\mathrm{d}s \leq\int_{0}%
^{t}\int_{\Omega}|Y||Z|e_{\varepsilon}(\phi_{\varepsilon})\mathrm{d}%
x\mathrm{d}s+\int_{0}^{t}\int_{\Omega}|Y||Z|\xi_{\varepsilon}(\phi
_{\varepsilon})\mathrm{d}x\mathrm{d}s\text{.}%
\]
Using Lemma \ref{lemma estimate for discrepancy}, we have that
\[
\lim_{\varepsilon\rightarrow0}\int_{0}^{t}\int_{\Omega}|Y||Z|\xi_{\varepsilon
}(\phi_{\varepsilon})\mathrm{d}x\mathrm{d}s\leq0.
\]
Hence, taking the limit for $\varepsilon\rightarrow0$, we get
\begin{equation}
\int_{0}^{t}\int_{\Omega}Y\cdot(\mathrm{d}\lambda_{ij}(x,s))_{d\times d}\cdot
Z\leq\int_{0}^{t}\int_{\Omega}|Y||Z|\mathrm{d}\lambda(x,s)\text{.}%
\label{formula}%
\end{equation}
Thus, $\lambda_{ij}$ are absolutely continuous with respect of $\lambda$ in
the sense of measures. Consequently, we can define the Radon-Nikodin
derivative of $\lambda_{ij}$ with respect to $\lambda$ as a $\lambda
$-measurable function $v_{ij}$ such that
\[
\mathrm{d}\lambda_{ij}(x,t)=v_{ij}(x,t)\mathrm{d}\lambda(x,t)\text{
\ \ \ \ \ }\lambda\text{-a.e.}%
\]
From formula (\ref{formula}), it follows that
\[
0\leq\left(  v_{ij}\right)  _{d\times d}\leq I\text{ \ \ \ \ }\lambda
\text{-a.e. }%
\]
and that
\[
\left(  v_{ij}\right)  _{d\times d}=\sum_{i=1}^{d}c_{i}v_{i}\otimes
v_{i}\text{ \ \ \ }\lambda\text{-a.e.}%
\]
for some $\lambda$-measurable functions $c_{i}$ and unit vectors $v_{i}$,
$i=1,...,d$. Moreover, they satisfy%
\[
0\leq c_{i}\leq1\text{,}\quad\sum_{i=1}^{d}c_{i}\leq1,\quad\sum_{i=1}^{d}%
v_{i}\otimes v_{i}=I.
\]
In order to construct the varifold $V$, we observe that, by multiplying equation
\[
\mu_{\varepsilon}=-\varepsilon\Delta\phi_{\varepsilon}+\frac{1}{\varepsilon
}F^{\prime}(\phi_{\varepsilon})
\]
with $Y\cdot\nabla\phi_{\varepsilon}$ for some $Y\in C^{1}(\bar{\Omega};%
\mathbb{R}
^{d})$ and integrating over $\Omega$, we get
\begin{align}
\int_{\Omega}Y\cdot\nabla\phi_{\varepsilon}\mu_{\varepsilon}\mathrm{d}x &
=\int_{\Omega}Y\cdot\nabla\phi_{\varepsilon}\left(  -\varepsilon\Delta
\phi_{\varepsilon}+\frac{1}{\varepsilon}F^{\prime}(\phi_{\varepsilon})\right)
\mathrm{d}x\nonumber\\
&  =-\int_{\Omega}\nabla Y:\left(  e_{\varepsilon}(\phi_{\varepsilon
})I-\varepsilon\nabla\phi_{\varepsilon}\otimes\nabla\phi_{\varepsilon}\right)
\mathrm{d}x+\int_{\partial\Omega}e_{\varepsilon}\left(  \phi_{\epsilon
}\right)  Y\cdot\nu\mathrm{d}S\text{.}\label{eqq}%
\end{align}
By passing to the limit for $\varepsilon\rightarrow0$ in relation (\ref{eqq}), we
obtain, for every $t>0$ and $Y\in C_{0}^{1}\left(  \Omega;%
\mathbb{R}
^{d}\right)  $,%
\begin{align*}
2\int_{\Omega}\chi_{\Omega_{t}^{T}}\nabla\cdot(\mu(t)Y)\mathrm{d}x &
=\int_{\Omega}\nabla Y:\left(  I-\sum_{i=1}^{d}c_{i}(x,t)v_{i}(x,t)\otimes
v_{i}(x,t)\right)  \mathrm{d}\lambda^{t}(x)\\
&  =\int_{\Omega}\nabla Y:\sum_{i=1}^{d}b_{i}^{t}(x)\left(  I-v_{i}%
(x,t)\otimes v_{i}(x,t)\right)  \mathrm{d}\lambda^{t}(x)
\end{align*}
where the coefficients $b_{i}^{t}$ are given by%
\[
b_{i}^{t}(x)=c_{i}(x,t)+\frac{1}{d-1}\left(  1-\sum_{i=1}^{d}c_{i}%
(x,t)\right)  \text{. }%
\]
Note that%
\[
0\leq b_{i}^{t}\leq1\text{,}\quad\sum_{i=1}^{d}b_{i}^{t}\geq1\text{.}%
\]
Finally, we define $p_{i}^{t}\in P$ by
\[
p_{i}^{t}=\{v_{i}(x,t),-v_{i}(x,t)\}\text{,}%
\]
$V^{t}$ as in (\ref{def V}), and $V$ by
\[
\mathrm{d}V(x,t,p)=\mathrm{d}V^{t}(x,p)\mathrm{d}t\text{.}%
\]
Moreover, by construction $V$ satisfies conditions (\ref{first variation V})
and (\ref{first variation V2}).

Relation (\ref{weak form sharp int}) follows from (\ref{rel1}) by passing to
the limit \ $\varepsilon\rightarrow0$, and using the above convergences.

We are only left to show relation (\ref{weak form u}). To this aim, let
$\varphi\in C_{0,\operatorname{div}}^{\infty}\left(  \Omega;%
\mathbb{R}
^{d}\right)  $. Then, by using relation $u_{\varepsilon}=-\nabla
P+\mu_{\varepsilon}\nabla\phi_{\varepsilon}$, for every $t>0$, we have
\[
\int_{0}^{t}\int_{\Omega}u_{\varepsilon}\varphi\mathrm{d}x\mathrm{d}%
s=-\int_{0}^{t}\int_{\Omega}\nabla\mu_{\varepsilon}\cdot\varphi\phi
_{\varepsilon}\mathrm{d}x\mathrm{d}s\text{.}%
\]
The above convergence results allow us to pass to the limit for $\varepsilon
\rightarrow0$ getting%
\[
\int_{0}^{t}\int_{\Omega}u\varphi\mathrm{d}x\mathrm{d}s=-\int_{0}^{t}%
\int_{\Omega}\nabla\mu\cdot\varphi\left(  -1+2\chi_{\Omega_{s}^{T}}\right)
\mathrm{d}x\mathrm{d}s\text{.}%
\]
Integrating by parts the right-hand side, we obtain%
\[
\int_{0}^{t}\int_{\Omega}u\varphi\mathrm{d}x\mathrm{d}s=\int_{0}^{t}%
\int_{\Sigma_{s}}2\mu\varphi\mathrm{d}S\mathrm{d}s\text{.}%
\]
This concludes the proof of Theorem \ref{main thm}.

\section*{Acknowledgements}

The financial support of the FP7-IDEAS-ERC-StG \#256872 (EntroPhase) is
gratefully acknowledged by the authors. The present paper also benefits from
the support of the GNAMPA (Gruppo Nazionale per l'Analisi Matematica, la
Probabilit\`{a} e le loro Applicazioni) of INdAM (Istituto Nazionale di Alta
Matematica) and the IMATI -- C.N.R. Pavia. S.M. acknowledges support by the
Austrian Science Fund (FWF) project P27052-N25. The Authors would like to
acknowledge the kind hospitality of the Erwin Schr\"{o}dinger International
Institute for Mathematics and Physics, where part of this research was
developed under the frame of the Thematic Program Nonlinear Flows.

\end{document}